\newcommand{\norm}[1]{\left\lVert#1\right\rVert}
\newcommand{\scal}[2]{\langle #1,#2\rangle}
\newcommand{\numberset}{\mathbb}
\newcommand{\N}{\numberset{N}} 
\newcommand{\R}{\numberset{R}} 
\newcommand{\cscal}{C^{0}_{\text{\tiny sc}}}
\DeclareMathOperator{\supp}{supp}
\DeclareMathOperator*{\esssup}{ess\,sup}
\newtheorem{theorem}{Theorem}
\newtheorem{definition}[theorem]{Definition}
\newtheorem{remark}[theorem]{Remark}
\begin{document}

\allowdisplaybreaks

\author[M. Bonafini]{Mauro Bonafini}\author[V.P.C. Le]{Van Phu Cuong Le}\author[R. Molinarolo]{Riccardo Molinarolo}

\address[M.\ Bonafini]{Dipartimento di Informatica, Universit\`a degli Studi di Verona, Strada Le Grazie, 37134 Verona, Italy}
\email{bonafini.mauro@univr.it}

\address[V.P.C.\ Le]{Institut f\"ur Mathematik, Universit\"at Heidelberg, Im Neuenheimer Feld 205, 69120 Heidelberg, Germany}
\email{cuong.le@uni-heidelberg.de}

\address[R.\ Molinarolo]{Dipartimento di Informatica, Universit\`a degli Studi di Verona, Strada Le Grazie, 37134 Verona, Italy}
\email{riccardo.molinarolo@univr.it}

\title[On fractional semilinear wave equations in non-cylindrical domains]{On fractional semilinear wave equations in non-cylindrical domains}

\begin{abstract}
	In this paper, we investigate a class of semilinear wave equations in non-cylindrical time-dependent domains, subject to exterior homogeneous Dirichlet conditions. Under mild regularity and monotonicity assumptions on the evolving spatial domains, we establish existence of weak solutions by two different methods: a constructive time-discretization scheme and a penalty approach. The analysis applies to nonlocal fractional Laplacians and potentials with Lipschitz continuous gradient, and to vector-valued maps.
\end{abstract}

\maketitle

\noindent
{\bf Keywords:} wave equations, non-local wave equations, non-cylindrical domains.

\bigskip

\noindent   
{{\bf 2020 Mathematics Subject Classification:}}
35L05, 
35L85, 
35Q74, 
35R37, 
74H20, 
74K35. 
	
\tableofcontents

\section{Introduction}
Linear, semilinear, and non-linear wave equations have been widely investigated in the mathematical literature, with particular attention devoted to existence, well-posedness, regularity, and singular limits. Among the many significant contributions, we mention the classical monographs \cite{hormander1997lectures,lax2006hyperbolic} and the seminal works on the topics \cite{ColomboHaffter, del2020interface, Grillakis, Maruo85, Schatzman78, SeTi} and references therein.

In this paper, we consider non-local equations on non-cylindrical domains. Hyperbolic PDEs on time-dependent domains model wave propagation, fluid-structure interaction, fluid dynamics, extensible beam dynamics, cavity resonators, and many other problems in mathematical physics (see the recent survey \cite{knobloch2015problems}). In particular, the problem addressed here naturally arises from elastodynamic models in mechanics, where a thin film is initially glued to a rigid substrate and gradually peeled off. In that context, it is natural to model the debonded region with a time-dependent, non-decreasing domain and to assume that the displacement satisfies a classical wave equation (see \cite{dal2016existence, LazzaroniMolinaroloRivaSolombrino, LAZZARONI2022112822} and references therein). Hence, considering a non-local operator with possible interactions between the displacement and the external bonded region is a natural extension of this setup. Time-dependent domains also arise in dynamic fracture models, where the crack evolution induces an expanding time-dependent set (cf.\ \cite{dal2011existence, dal2017wave, dal2002model}). Furthermore, at least for cylindrical domains, non-local non-linear wave equations with a double-well potential are relevant in the study of evolving interfaces or defects, such as minimal surfaces in Minkowski space (see \cite{bellettini2010time, del2020interface, jerrard2011defects, Neu, SmailyJerrad}).

In material science, a very similar model arises in the context of peridynamics, a non-local elasticity theory first introduced by Silling in \cite{MR1727557}. The mathematical consistency, the numerical analysis and the qualitative properties have been addressed by several authors: far from been exhaustive, we mention \cite{MR4648534,MR4500878,MR3884619} as well as \cite{MR3297136,MR3403446} and references therein.

\smallskip

Our work is motivated by the following key objective. We aim to extend the existence results for fractional wave equations in \cite{bonafini2022weak, bonafini2021obstacle, bonafini2019variational} to the non-cylindrical setting, and, in doing so, to extend to the fractional setting the results in non-cylindrical domains presented in \cite{LazzaroniMolinaroloRivaSolombrino, LAZZARONI2022112822}.

The literature contains several results on hyperbolic equations in non-cylindrical domains, obtained through different methods and under various assumptions on the domain evolution. Among these contributions, in \cite{bernardi1998some} the authors present an abstract formulation together with regularization techniques for the associated operators. Notable contributions include diffeomorphism-based methods that recast the problem in cylindrical domains (see \cite{cooper1973nonlinear, da1990existence, dal2016existence, dal2017wave, dal2019cauchy, ma2017dynamics, sikorav1990linear}), with diffeomorphisms being a suitable option due to the local nature of the operators considered in those works.
Furthermore, we refer to \cite{alphonse2015abstract, bernardi2001variational, CalvoNovagaOrlandi, gianazza1996abstract, horvath2020exactly} and references therein for various approaches on different evolution equations (parabolic, Schrödinger, Navier-Stokes, etc.) in non-cylindrical domains (see also \cite{dalla2023multi, dalla2025shape} for shape analysis results for parabolic equations via potential theory).

\smallskip
In this paper, we focus on a semilinear fractional wave equation defined in a non-cylindrical domain $\mathcal{O} \subset [0,T] \times \mathbb{R}^d$, where $T > 0$ represents a finite time horizon. Time slices $\{\Omega_t\}_{t \in [0,T]}$ of $\mathcal{O}$ will describe our evolving domains. In this context, it is customary to impose some form of continuity on the domain evolution, such as Lipschitz continuity, Hölder continuity, or absolute continuity (see \cite{bonaccorsi2001variational, paronetto2013existence, savare1997parabolic}, respectively). Moreover, monotonicity assumptions (most notably expanding domains) are typical (see \cite{bernardi2001variational, gianazza1996abstract}), and will be part of our working assumptions (for details on the geometrical framework, see Section \ref{sec:problem}). We consider the general hyperbolic equation
\begin{equation}\label{eq:PDE}
    \ddot{u}+(-\Delta)^s u  + \nabla W(u) = f \qquad\text{in }\mathcal{O},
\end{equation}
where $(-\Delta)^s$ represents the fractional $s$-Laplacian, $W$ is a potential such that $\nabla W$ is Lipschitz continuous and $f$ is a given forcing term. Our aim is to prove existence of suitably defined weak solutions to \eqref{eq:PDE} coupled with homogeneous Dirichlet boundary conditions and for given initial time conditions.

Two different existence proofs are presented. The first one is based on a time-discretization scheme, inspired by the approach of Calvo, Novaga, and Orlandi \cite{CalvoNovagaOrlandi} (see also \cite{LazzaroniMolinaroloRivaSolombrino}). The second proof relies on the penalty method introduced by J.-L. Lions \cite{Lions64}, which has since been widely applied to evolution systems on time-dependent domains, including wave equations \cite{Lopez2015Remarks, zolesio1990approximation} and quasilinear hyperbolic $p$-Laplacian problems \cite{FerreiraRaposoSantos2006}.

\smallskip
The paper is organized as follows: in Section \ref{sec:pre not} we briefly recall fractional Sobolev spaces and the fractional Laplace operator to fix notation. In Section \ref{sec:problem} we define the geometrical setting of the problem, we propose a definition of weak solution and we state our main existence result. Section \ref{sec: time discr} is devoted to the proof via the time-discretization method, while Section \ref{sec: penalty} provides the proof via a Galerkin-penalization approximation. Appendix \ref{appendix} recalls a result for the problem in cylindrical domains that can be obtained as a simple adaptation of results in \cite{bonafini2021obstacle, bonafini2019variational}.

\section{Preliminaries and notations}\label{sec:pre not}

We denote by $\mathcal{L}^d$ the $d$-dimensional Lebesgue measure, and the integration with respect to the Lebesgue measure by $dx$. We adopt standard notations and definitions for Lebesgue spaces $L^p$ and Sobolev spaces $W^{n,p}$, $1 \leq p \leq +\infty$ and $n \in \N$. Given a Banach space $\mathcal{X}$, we denote by $\mathcal{X}^*$ its topological dual and by $\langle \varphi,x \rangle_{\mathcal{X}^\ast, \mathcal{X}}$ the duality pairing between $\varphi \in \mathcal{X}^*$ and $x \in \mathcal{X}$.

\medskip
\noindent
\textbf{Bochner spaces.} For $T > 0$, let $\{ \mathcal{X}_t \}_{t \in [0,T]}$ be a family of non-decreasing Banach spaces, i.e., $\mathcal{X}_s \subseteq \mathcal{X}_t$ for every $0 \leq s \leq t \leq T$, and denote by $\mathcal{X}:=\mathcal{X}_T$. With a slight abuse of notation, we say that a function $u$ belongs to $L^p(0,T; \mathcal{X}_t)$, $1 \leq p \leq +\infty$, if $u \in L^p(0,T; \mathcal{X})$ and $u(t) \in \mathcal{X}_t$ for a.e. $t \in [0,T]$. In particular, the map $t \mapsto \| u(t) \|_{\mathcal{X}}$ is in $L^p(0,T)$. 
We denote by $\cscal([0,T]; \mathcal{X})$ the subspace of $L^{\infty}(0,T; \mathcal{X})$ consisting of functions $u$ that are scalarly continuous, namely such that 
\[
\text{the map $t \mapsto \langle \varphi, u(t) \rangle_{\mathcal{X}^\ast, \mathcal{X}}$ is continuous on $[0,T]$ for every $\varphi \in \mathcal{X}^\ast$.}
\]
In particular, we recall the following result \cite[Lemma 8.1]{lionsmagenes}: if $\mathcal{X}$ and $\mathcal{Y}$ are two Banach spaces, $\mathcal{X}$ reflexive and $\mathcal{X}\subset \mathcal{Y}$ with continuous injection, then
\begin{equation*}
	L^{\infty}(0,T;\mathcal{X}) \cap \cscal([0,T]; \mathcal{Y}) = \cscal([0,T]; \mathcal{X}). 
\end{equation*}

\medskip
\noindent
\textbf{Fractional Sobolev spaces.} Throughout this paper, we fix $s > 0$ and $d,m \geq 1$. Let us consider the Schwartz space $\mathcal{S}(\R^d; \R^m)$, i.e., the space of rapidly decreasing $C^\infty$ functions from $\R^d$ to $\R^m$. For any $u \in \mathcal{S}(\R^d; \R^m)$ we define the Fourier transform $\mathcal{F}u$ of $u$ as
\[
\mathcal{F}u(\xi) := \frac{1}{(2\pi)^{d/2}} \int_{\R^d} \textup{e}^{-\textup{i} \xi \cdot x} u(x) \,dx, \quad \xi \in \R^d.
\]
The fractional Laplacian operator $(-\Delta)^s$ can be defined, up to constants, as 
\[
(-\Delta)^s \colon \mathcal{S}(\R^d;\R^m) \to L^2(\R^d;\R^m), \;u \mapsto (-\Delta)^s u := \mathcal{F}^{-1}(|\xi|^{2s}\mathcal{F}u). 
\]
Given $u, v \in L^2(\R^d;\R^m)$, we consider the bilinear form defined by
\[
[u, v]_{s} := \int_{\R^{d}} (-\Delta)^{s/2}u(x) \cdot (-\Delta)^{s/2}v(x) \, dx,
\]
and the corresponding Gagliardo seminorm
\begin{equation*}
[u]_s := \sqrt{[u,u]_s} = \|(-\Delta)^{s/2}u\|_{L^2(\R^d;\R^m)}.    
\end{equation*}
We define the fractional Sobolev space of order $s > 0$ as
\[
H^s(\R^d;\R^m) := \left\{ u \in L^2(\R^d;\R^m) \,:\, \int_{\R^d} (1+|\xi|^{2s})|\mathcal{F}u(\xi)|^2\,d\xi < +\infty \right\},
\]
and for every $u \in H^s(\R^d;\R^m)$ the $H^s$-norm of $u$ as
\begin{equation*}
\|u\|_s^2 = \|u\|_{L^2(\R^d;\R^m)}^2 + [u]_s^2.
\end{equation*}
It is well-known that $(H^s(\R^d;\R^m), \|\cdot\|_s)$ is a Hilbert space (see e.g. \cite{DiNezzaPalatucciValdinoci12, mclean2000strongly}).

Let $\Omega \subset \R^d$ be an open bounded set with Lipschitz boundary and define
\[
\tilde{H}^s(\Omega; \R^m) := \left\{ u \in H^s(\R^d;\R^m) \,:\, u = 0 \text{ a.e. in } \R^d\setminus \Omega \right\},
\]
endowed with the $H^s$-norm, and define its dual $H^{-s}(\Omega; \R^m) := (\tilde{H}^s(\Omega; \R^m))^*$. One can prove, see e.g. \cite{mclean2000strongly}, that $\tilde{H}^s(\Omega; \R^m)$ corresponds to the closure of $C^\infty_c(\Omega; \R^m)$ with respect to the $H^s$-norm. We have the continuous injections $\tilde{H}^s(\Omega; \R^m) \subset L^2(\Omega; \R^m) \subset H^{-s}(\Omega; \R^m)$. In particular, the duality product between $H^{-s}(\Omega; \R^m)$ and $\tilde{H}^s(\Omega; \R^m)$ satisfies
\begin{equation*}
    \langle f,u \rangle_{H^{-s}(\Omega; \R^m), \tilde{H}^s(\Omega; \R^m)} = \langle f,u \rangle_{L^2(\Omega; \R^m)} \quad \text{for every } f \in L^2(\Omega; \R^m) \text{ and } u \in \tilde{H}^s(\Omega; \R^m).
\end{equation*}

In the following, for notational convenience, we usually drop the target space $\R^m$ whenever clear from the context, and just write $L^2(\Omega)$, $\tilde{H}^s(\Omega)$ and ${H}^{-s}(\Omega)$. Moreover, each function $u \in L^2(\Omega)$ is naturally interpreted as a function of $L^2(\R^d)$ considering it extended to zero outside of $\Omega$.

\section{Formulation of the problem and main result}\label{sec:problem}
In this section, we introduce the fractional semilinear wave equation on non-cylindrical domains, we provide a definition of weak solution and we state our main result.

Let $T > 0$ be a fixed final time. Consider a family $\{\Omega_t\}_{t\in [0,T]} \subset \R^d$ of nonempty open bounded sets with Lipschitz boundary and define the domain $\mathcal{O} \subset [0,T] \times \R^d$ by
\begin{equation*}
 \mathcal{O}:= \bigcup_{t\in(0,T)}\{t\}\times \Omega_t.
\end{equation*}
We will assume the following structural conditions:
\begin{equation}\label{hyp on h}
\begin{split}
&\raisebox{0.25ex}{\tiny$\bullet$} \, \, \text{ the family } \{\Omega_t\}_{t \in [0,T]} \text{ is non-decreasing, i.e., } \Omega_{s} \subset \Omega_t \text{ for every } 0 \leq s \leq t \leq T,
\\
&\raisebox{0.25ex}{\tiny$\bullet$} \, \,
\mathcal{O} \subset \R \times \R^d \text{ is an open set with Lipschitz boundary}.
\end{split}
\end{equation}
For a function $u \colon [0,T] \times \R^d \to \R^m$, let us consider the problem
\begin{equation}\label{eq:u}
	\begin{cases}
		\ddot{u}+(-\Delta)^s u  + \nabla W(u) = f &\text{in }\mathcal{O},\\
		u=0 &\text{in }((0,T)\times \R^d) \setminus \mathcal{O},\\
		u(0,x)=u_0(x) & \text{for } x \in \Omega_0,\\
		\dot{u}(0,x)=v_0(x) &\text{for }  x \in \Omega_0,
	\end{cases}
\end{equation}
where: 
\begin{subequations}
\begin{align}
    \raisebox{0.25ex}{\tiny$\bullet$} \, \, & W \in C^1(\R^m) \, \text{is a non-negative potential with Lipschitz continuous gradient, } \nonumber 
    \\
    &\text{i.e., $\exists \mathrm{K} > 0$ such that } |\nabla W(x) - \nabla W(y)| \leq \mathrm{K} |x-y| \text{ for all } x,y \in \R^m,
    \label{eq: nabla W}
    \\
    \raisebox{0.25ex}{\tiny$\bullet$} \, \, & f\in L^\infty(0,T; L^2(\R^d)) \text{ with } \supp f = \overline{\{(t,x) \in [0,T] \times \R^d \mid f(t,x) \neq 0\}}\subset \mathcal{O}, \label{eq:f}
    \\
    \raisebox{0.25ex}{\tiny$\bullet$} \, \, &u_0\in \tilde{H}^s(\Omega_0) \text{ and }
			v_0\in L^2(\Omega_0). \label{eq: u_0 v_0}
\end{align}
\end{subequations} 

\begin{remark}
    Problem \eqref{eq:u} consists of a fractional semilinear wave equation in the non-cylindrical domain $\mathcal{O}$ with forcing term $f$.  Note that in \eqref{eq:u} the homogeneous Dirichlet condition is prescribed in $((0,T)\times \R^d) \setminus \mathcal{O}$ and not simply on the parabolic boundary of $\mathcal{O}$, consistently with the non-local character of the operator $(-\Delta)^s$. 
\end{remark}

\begin{remark}\label{property of W}
    Since the PDE in \eqref{eq:u} depends on $\nabla W$, the set of solutions of the problem is invariant under translation of the potential $W$. Hence, without loss of generality, we can always assume 
    \begin{equation*}
        W(0) = 0.
    \end{equation*}
    In particular, condition \eqref{eq: nabla W} implies at most linear growth of $\nabla W$: there exists $C_W > 0$ such that
    \begin{equation}\label{ineq for nabla W(y)}
    |\nabla W(y)| \leq C_{W} (1 + |y|) \quad \text{for all } y \in \R^m.
    \end{equation}
    Furthermore, one can prove that there exists two positive constants $C_1,C_2$ such that the following holds (see also \cite[Prop. 8 (iii)]{bonafini2021obstacle}):
    \begin{equation}\label{ineq for W(x)-W(y)}
        |W(x)-W(y)| \leq \left(C_1 (|x|+|y|) + C_2 \right) |x-y| \quad \text{for all } x,y \in \R^m.
    \end{equation}
\end{remark}

We define a weak solution to problem \eqref{eq:u} as follows.

\begin{definition}[Weak solution]\label{defweaksolu}
	We say that a function $u \colon [0, T] \times \R^{d}\to \R^m$ is a weak solution to problem \eqref{eq:u} if the following hold:
	\begin{enumerate}
		\item[(i)] $u \in L^\infty(0,T; \tilde{H}^s(\Omega_t))$ and $\dot{u} \in L^\infty(0,T; L^2(\Omega_t))$;
  
        \item[(ii)] $u \in C^0([0,T]; L^2(\Omega_T))$ and $\dot{u} \in C^0([0,T]; H^{-s}(\Omega_0))$, $u(0)=u_0$ and $\dot{u}(0) = v_0$;
  
		\item[(iii)]  $u$ satisfies
		\begin{equation}\label{def eq u weak sol}
			\begin{aligned}
				-\int_{0}^{T} \langle \dot{u}(t), \dot{\varphi}(t) \rangle_{L^2(\Omega_T)} \, dt +\int_{0}^{T}[u(t), \varphi(t)]_s \,dt&
            	\\+\int_{0}^{T} \scal{\nabla W(u(t))}{\varphi(t)}_{L^2(\Omega_{T})} \,dt
				&= \int_{0}^{T} \langle f(t), \varphi(t) \rangle_{L^2(\Omega_T)}\, dt,
			\end{aligned}
		\end{equation}
		for every $\varphi \in L^1(0,T; \tilde{H}^s(\Omega_T)) \cap W^{1,1}(0,T; L^2(\Omega_T))$ with $\supp \varphi \subset \mathcal{O}$.
	\end{enumerate}
\end{definition}

Finally, we introduce the energy definition for a weak solution to problem \eqref{eq:u}.
\begin{definition}\label{def energy}
	Let $u$ be a weak solution to \eqref{eq:u}. We define the energy of $u$ as
	\begin{equation*}
    E(u(t)) := \frac12 \|\dot{u}(t)\|^{2}_{L^{2}(\Omega_t)}+\frac12 [u(t)]_{s}^2+\|W(u(t))\|_{L^{1}(\Omega_t)} \quad \text{for every }t \in [0,T],
    \end{equation*}
    and we set
    \begin{equation*}
        E_0 := E(u(0)) =  \frac12 \norm{v_0}^2_{L^2(\Omega_{0})} +\frac12 [u_0]_{s}^2+\|W(u_0)\|_{L^1(\Omega_{0})}.
    \end{equation*}
\end{definition}

Our main result reads as follows.
\begin{theorem}\label{thm main result}
There exists a weak solution $u$ to problem \eqref{eq:u} in the sense of Definition \ref{defweaksolu}. Moreover, the following energy inequality holds:
\begin{equation}\label{eq: en ineq for u}
	E(u(t)) 
    \leq
    E_0 + \int_{0}^{t}\scal{f(\tau)}{\dot{u}(\tau)}_{L^2(\Omega_\tau)}\, d\tau \qquad \text{ for every $t\in [0,T]$}.
\end{equation}
\end{theorem}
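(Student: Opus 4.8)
The plan is to construct a weak solution by the time-discretization scheme developed in Section~\ref{sec: time discr}; an alternative construction via the Galerkin--penalization method of Lions \cite{Lions64}, carried out in Section~\ref{sec: penalty}, yields the same conclusion, and I sketch only the former. Fix $n\in\N$, put $\tau=T/n$, $t_k=k\tau$, $\Omega^k:=\Omega_{t_k}$, $f^k:=\tau^{-1}\int_{t_{k-1}}^{t_k}f(t)\,dt$, $u^0:=u_0$, $u^{-1}:=u_0-\tau v_0$, $d^0:=v_0$, and define $u^k\in\tilde H^s(\Omega^k)$ inductively as the unique solution — furnished by the Lax--Milgram theorem, since $\tau^{-2}\scal{\cdot}{\cdot}_{L^2}+[\cdot,\cdot]_s$ is continuous and coercive on $\tilde H^s(\Omega^k)$ — of
\begin{equation*}
\Big\langle\tfrac{u^k-2u^{k-1}+u^{k-2}}{\tau^{2}},\varphi\Big\rangle_{L^2(\Omega^k)}+[u^k,\varphi]_s+\scal{\nabla W(u^{k-1})}{\varphi}_{L^2(\Omega^k)}=\scal{f^k}{\varphi}_{L^2(\Omega^k)}\qquad\text{for all }\varphi\in\tilde H^s(\Omega^k).
\end{equation*}
The monotonicity in \eqref{hyp on h} is exactly what makes the scheme well posed: it gives $u^{k-1},u^{k-2}\in\tilde H^s(\Omega^{k-1})\subset\tilde H^s(\Omega^k)$, so the right-hand side makes sense.

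Next I would derive the discrete energy estimate. Testing the $k$-th equation with $\varphi=u^k-u^{k-1}=\tau d^k$, where $d^k:=(u^k-u^{k-1})/\tau$, and using $\scal{d^k-d^{k-1}}{d^k}_{L^2}\ge\tfrac12\|d^k\|_{L^2}^2-\tfrac12\|d^{k-1}\|_{L^2}^2$, $[u^k,u^k-u^{k-1}]_s\ge\tfrac12[u^k]_s^2-\tfrac12[u^{k-1}]_s^2$ and the Taylor-type inequality $W(a)-W(b)\le\nabla W(b)\cdot(a-b)+\tfrac{\mathrm K}{2}|a-b|^2$ (a consequence of \eqref{eq: nabla W}) integrated over $\Omega^k$, one gets
\begin{equation*}
E^k-E^{k-1}\le\tau\scal{f^k}{d^k}_{L^2}+\tfrac{\mathrm K\tau^{2}}{2}\|d^k\|_{L^2}^2,\qquad E^k:=\tfrac12\|d^k\|_{L^2}^2+\tfrac12[u^k]_s^2+\|W(u^k)\|_{L^1},
\end{equation*}
with $E^0=E_0$. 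Summing over $k$, bounding $\sum_j\tau\scal{f^j}{d^j}_{L^2}$ by Young's inequality and the perturbation by $\tfrac{\mathrm K\tau^{2}}{2}\sum_j\|d^j\|_{L^2}^2\le\mathrm K\tau\sum_j\tau E^j$, a discrete Gr\"onwall argument produces $\max_kE^k\le C$ and $\tau\sum_j\|d^j\|_{L^2}^2\le C$ with $C=C(E_0,\|f\|_{L^\infty(0,T;L^2)},\mathrm K,T)$ independent of $n$; in particular the perturbation term is $O(\tau)$ as $n\to\infty$.

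Then I would pass to the limit. Let $\bar u_n,\bar d_n$ be the piecewise-constant and $\hat u_n$ the piecewise-affine time interpolants of $(u^k)_k,(d^k)_k$, so $\dot{\hat u}_n=\bar d_n$ and $\|\hat u_n-\bar u_n\|_{L^\infty(0,T;L^2)}=O(\tau)$. The energy bound gives $\bar u_n,\hat u_n$ bounded in $L^\infty(0,T;\tilde H^s(\Omega_T))$ and $\bar d_n$ in $L^\infty(0,T;L^2)$, while rewriting the equation as $(d^k-d^{k-1})/\tau=f^k-(-\Delta)^su^k-\nabla W(u^{k-1})$ and using \eqref{ineq for nabla W(y)} bounds the discrete accelerations in $L^\infty(0,T;H^{-s}(\Omega_T))$. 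Up to a subsequence, $\bar u_n,\hat u_n\overset{*}{\rightharpoonup}u$ in $L^\infty(0,T;\tilde H^s(\Omega_T))$, $\bar d_n\overset{*}{\rightharpoonup}\dot u$ in $L^\infty(0,T;L^2)$, and by an Aubin--Lions--Simon argument $\hat u_n\to u$ in $C^0([0,T];L^2(\Omega_T))$; hence $u\in W^{1,\infty}(0,T;L^2)\subset C^0([0,T];L^2(\Omega_T))$, $\bar u_n\to u$ in $L^\infty(0,T;L^2)$, and — $\nabla W$ being Lipschitz — $\nabla W(\bar u_n)\to\nabla W(u)$ in $L^2(0,T;L^2)$. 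To see that $u(t)\in\tilde H^s(\Omega_t)$ and $\dot u(t)\in L^2(\Omega_t)$ for a.e.\ $t$, i.e.\ Definition~\ref{defweaksolu}(i), note that $\bar u_n(t)=u^k$ vanishes outside $\Omega^k$ with $t\le t_k\le t+\tau$, so for each $\varepsilon>0$ and $n$ large $\bar u_n(t)\in\tilde H^s(\Omega_{t+\varepsilon})$ for a.e.\ $t$; this is stable under the $L^1$--$L^\infty$ pairing and so passes to $u$, and $\bigcap_{\varepsilon>0}\Omega_{t+\varepsilon}$ coincides with $\Omega_t$ up to an $\mathcal L^d$-null set for a.e.\ $t$ because $\partial\mathcal O$ is $\mathcal L^{d+1}$-null by \eqref{hyp on h}. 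Testing the summed discrete equations against $\varphi\in C^\infty_c(\mathcal O)$ (a dense class), performing the discrete integration by parts in time in the inertial term (no boundary contributions, since $\varphi$ vanishes near $t=0,T$), and letting $n\to\infty$ with the convergences above yields \eqref{def eq u weak sol}. The continuity $\dot u\in C^0([0,T];H^{-s}(\Omega_0))$ and the initial conditions $u(0)=u_0$, $\dot u(0)=v_0$ in Definition~\ref{defweaksolu}(ii) then follow from $\hat u_n(0)=u_0$, from $\hat u_n\to u$ in $C^0(L^2)$, from the equation at the first step, and from a by-now standard limiting argument at the discrete level (cf.\ \cite{CalvoNovagaOrlandi,LazzaroniMolinaroloRivaSolombrino}).

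Finally, for the energy inequality \eqref{eq: en ineq for u}, summing the discrete estimate up to $t_k$ gives $E^k\le E_0+\int_0^{t_k}\scal{\bar f_n}{\bar d_n}_{L^2}\,dt+O(\tau)$; since $\bar f_n\to f$ in $L^2(0,T;L^2)$ and $\bar d_n\overset{*}{\rightharpoonup}\dot u$, the right-hand side tends to $E_0+\int_0^t\scal{f(\tau)}{\dot u(\tau)}_{L^2(\Omega_\tau)}\,d\tau$ along limit points of $t_k$, and weak lower semicontinuity of $w\mapsto\tfrac12\|\dot w\|_{L^2}^2+\tfrac12[w]_s^2+\|W(w)\|_{L^1}$ gives \eqref{eq: en ineq for u} for a.e.\ $t$; since the right-hand side is continuous in $t$ and $t\mapsto E(u(t))$ is lower semicontinuous (using $u\in C^0(L^2)$, the weak continuity of $\dot u$ into $L^2(\Omega_\sigma)$ for every $\sigma<t$ extracted from the weak formulation, \eqref{ineq for W(x)-W(y)}, and the openness of $\mathcal O$), it extends to every $t\in[0,T]$. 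I expect the principal obstacle to be precisely this interplay between the discretization and the moving domain: tracking which time-dependent space each iterate and interpolant lives in, and in particular showing that the weak-$*$ limit inherits the pointwise-in-time constraint $u(t)\in\tilde H^s(\Omega_t)$ — the step in which both clauses of \eqref{hyp on h} are genuinely used and which has no counterpart in the cylindrical theory of Appendix~\ref{appendix} — while a secondary difficulty is upgrading the energy inequality, the time-continuity of $\dot u$, and the attainment of the initial velocity from ``a.e.\ $t$'' to ``every $t$'', which relies on the weak lower semicontinuity of the energy and on the weak-continuity properties squeezed out of the weak formulation.
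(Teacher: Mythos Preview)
Your proof is correct, but the time-discretization you employ is genuinely different from the one in Section~\ref{sec: time discr}. The paper does \emph{not} discretize the PDE itself: on each subinterval $[t^n_{k-1},t^n_k]$ it invokes Theorem~\ref{app:thmexistence} to produce a weak solution $u^n_k$ of the \emph{continuous-in-time} fractional wave equation on the fixed cylinder $(t^n_{k-1},t^n_k)\times\Omega_{t^n_{k-1}}$, and then glues the pieces together with matched Cauchy data into a global approximant $u^n$. You instead run a fully discrete implicit scheme (Lax--Milgram at every step, semi-implicit in $\nabla W$), which is closer to the minimizing-movements construction that the paper uses only \emph{inside} Appendix~\ref{appendix} to prove Theorem~\ref{app:thmexistence} on cylinders.

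The two routes have complementary trade-offs. The paper's construction is modular --- the cylindrical existence is a black box --- and, because the spatial domain on the $k$-th piece is $\Omega_{t^n_{k-1}}$ (left endpoint), one has $\supp u^n\subset\bigcup_k[t^n_{k-1},t^n_k]\times\Omega_{t^n_{k-1}}\subset\mathcal O$ automatically, so the limit constraint $u(t)\in\tilde H^s(\Omega_t)$ is immediate. Your scheme is more elementary (no appendix result needed, just Lax--Milgram), but since you pose $u^k\in\tilde H^s(\Omega_{t_k})$ (right endpoint) the interpolants may overshoot $\mathcal O$ by one time step, which forces the extra $\varepsilon$-argument exploiting the second clause of~\eqref{hyp on h}; you correctly identify this as the principal obstacle. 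One small slip: the discrete acceleration identity holds only against test functions in $\tilde H^s(\Omega^k)$, so the uniform second-derivative bound you get is in $L^\infty(0,T;H^{-s}(\Omega_0))$ rather than $H^{-s}(\Omega_T)$ --- but that is precisely what is needed for $\dot u\in C^0([0,T];H^{-s}(\Omega_0))$, so the argument goes through unchanged.
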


\begin{remark}
    In this paper we focus only on the existence of weak solutions to problem \eqref{eq:u}.  We point out that, in \cite{MR3884619}, a problem closely related to \eqref{eq:u} has been investigated in the domain $(0,T) \times \mathbb{R}^d$. There, the authors establish the existence of solutions for a non-linear non-local dispersive evolution equation featuring singular and non-Lipschitz interactions confined to a finite horizon. Their proof relies on a regularization procedure that yields solutions that take values in a suitable fractional-type space. Furthermore, the authors are able to prove wellposedness of their model. We expect that establishing uniqueness in the setting of time-dependent domains requires stronger assumptions on the family $\{\Omega_t\}_{t\in [0,T]}$, which goes beyond the scope of this work.
\end{remark}

\section{Time discretization method}\label{sec: time discr}
In this section, we provide a proof of Theorem \ref{thm main result} based on a time-discretization method in the spirit of \cite{CalvoNovagaOrlandi} and \cite{LazzaroniMolinaroloRivaSolombrino} (see also \cite{LAZZARONI2022112822} for the radial setting). The idea is the following: for each $n \in \N$,
we approximate $\mathcal{O}$ from the interior by
\begin{equation*}
    \bigcup_{k=1}^n (t_{k-1}^{n}, t_{k}^{n}) \times \Omega_{t_{k-1}^{n}} \subseteq \mathcal{O},
\end{equation*}
where $\{t^n_k\}_{k=0}^{n}$ is a uniform partition of the interval $[0,T]$. We seek, at each step $k = 1, \dots, n$, a solution $u^n_k$ of the PDE
\[
\ddot{u}+(-\Delta)^s u  + \nabla W(u) = f \quad \text{in } (t^n_{k-1}, t^n_{k}) \times \Omega_{t^n_{k-1}}  
\]
with initial data determined through the solution at the previous step $k-1$. An existence result in the cylindrical case was proved in \cite{bonafini2021obstacle} for the homogeneous case (i.e., without the source term $f$) and, for convenience, we extend such existence result to the non-homogeneous case in Appendix \ref{appendix}. We then prove that the piecewise constant in time approximants $\{u^n\}_n$ built from the families $\{u^n_k\}_{n,k}$ yield, in the limit, a weak solution to \eqref{eq:u} in the sense of Definition  \ref{defweaksolu}.

\begin{proof}[Proof of Theorem \ref{thm main result}]
$\,$

\medskip
\noindent
\emph{Step 1. Discrete approximants and energy estimates.}
For every $n \in \N$, let $\{t^n_k\}_{k=0}^{n}$ be the uniform partition of the interval $[0,T]$ given by
\begin{equation*}
t_k^n := \frac{kT}{n} \quad \text{for } k =0, 1, \dots, n.
\end{equation*}
Upon defining $u^n_{0}(t,x) = u_0(x) + tv_0(x)$, we inductively define
\[
u^n_k \colon [t^n_{k-1},t^n_{k}] \times \R^d \to \R^m \quad \text{for } k=1,\dots,n,
\]
as a weak solution to the fractional semilinear wave equation
\begin{equation}\label{eq for u^n_k}
\begin{cases}
\ddot u + (-\Delta)^s u + \nabla W (u) =f &\text{in }(t^n_{k-1},t^n_{k})\times\Omega_{t^n_{k-1}},\\
u = 0 &\text{in } (t^n_{k-1},t^n_{k}) \times (\R^d \setminus \Omega_{t^n_{k-1}}),\\
u(t^n_{k-1}, x)=u^n_{k-1}(t^n_{k-1},x) & \text{for } x \in \Omega_{t^n_{k-1}},\\
\dot{u}(t^n_{k-1}, x)=\dot{u}^n_{k-1}(t^n_{k-1}, x)& \text{for } x \in \Omega_{t^n_{k-1}}.
\end{cases}
\end{equation}
Indeed, since we are considering a fixed cylindrical domain $(t^n_{k-1},t^n_{k})\times\Omega_{t^n_{k-1}}$, Theorem \ref{app:thmexistence} in Appendix \ref{appendix} guarantees the existence of a weak solution $u_k^n$ to \eqref{eq for u^n_k} such that
\begin{subequations}\label{eq:uknproperties}
\begin{equation}
\begin{aligned}\label{eq:spacesukn}
	u^n_k &\in L^\infty(t^n_{k-1}, t^n_k; \tilde{H}^s(\Omega_{t^n_{k-1}})) \cap C^0([t^n_{k-1}, t^n_k]; L^2(\Omega_{t^n_{k-1}})) \cap \cscal([t^n_{k-1}, t^n_k]; \tilde{H}^s(\Omega_{t^n_{k-1}})), \\
	\dot{u}^n_k &\in L^{\infty}(t^n_{k-1}, t^n_k;L^2(\Omega_{t^n_{k-1}})) \cap C^0([t^n_{k-1}, t^n_k]; H^{-s}(\Omega_{t^n_{k-1}})) \cap \cscal([t^n_{k-1}, t^n_k]; L^{2}(\Omega_{t^n_{k-1}})), \\
	\ddot{u}^n_k &\in L^{\infty}(t^n_{k-1}, t^n_k;H^{-s}(\Omega_{t^n_{k-1}})),
\end{aligned}
\end{equation}
with initial conditions satisfied as
\begin{equation}\label{eq:ICukn}
	\begin{aligned}
	u^n_k(t^n_{k-1}) &= u^n_{k-1}(t^n_{k-1}) \text{ in the sense of } \tilde{H}^s(\Omega_{t^n_{k-1}}), \\ \dot{u}^n_k(t^n_{k-1}) &= \dot{u}^n_{k-1}(t^n_{k-1}) \text{ in the sense of } L^2(\Omega_{t^n_{k-1}}),
	\end{aligned}
\end{equation}
and with $u_k^n$ satisfying
\begin{equation}\label{eq:weakunk}
	\begin{split}
		&-\int_{t^n_{k-1}}^{t^n_k} \scal{\dot{u}^n_k(t)}{\dot{\varphi}(t)}_{L^2(\Omega_{t^n_{k-1}})} dt + \int_{t^n_{k-1}}^{t^n_k} [ u^n_k(t), \varphi(t) ]_{s} \, dt +\int_{t^n_{k-1}}^{t^n_k} \scal{ \nabla W(u^n_k(t))}{\varphi(t)}_{L^2(\Omega_{t^n_{k-1}})} \,dt \\
		&= \scal{\dot{u}^n_k(t^n_{k-1})}{\varphi(t^n_{k-1})}_{L^2(\Omega_{t^n_{k-1}})} -\scal{\dot{u}^n_k(t^n_k)}{\varphi(t^n_k)}_{L^2(\Omega_{t^n_{k-1}})} + \int_{t^n_{k-1}}^{t^n_k} \scal{f(t)}{\varphi(t)}_{L^2(\Omega_{t^n_{k-1}})}\,dt \\
		&\text{for every $\varphi \in L^1(t^n_{k-1}, t^n_k; \tilde{H}^s(\Omega_{t^n_{k-1}})) \cap W^{1,1}(t^n_{k-1}, t^n_k; L^2(\Omega_{t^n_{k-1}}))$.}
	\end{split}
\end{equation}
In particular,
\begin{equation}\label{ineq: en bal u^n_k}
	\begin{aligned}
		E(u_k^n(t)) &= \frac12 \norm{\dot{u}^n_k(t)}^2_{L^2(\Omega_{t^n_{k-1}})}+\frac12 [u^n_k(t)]_{s}^2 + \|W(u^n_k(t))\|_{L^1(\Omega_{t^n_{k-1}})}
		\\
		& \leq \frac 12\norm{\dot{u}^n_{k}(t^n_{k-1})}^2_{L^2(\Omega_{t^n_{k-1}})}+ \frac12 [{u}^n_{k}(t^n_{k-1})]_{s}^2 +\|W(u^n_{k}(t^n_{k-1}))\|_{L^1(\Omega_{t^n_{k-1}})}
		\\
		& \quad +\int_{t^n_{k-1}}^{t}\scal{f(\tau)}{\dot{u}^n_k(\tau)}_{L^2(\Omega_{t^n_{k-1}})}
		\,d\tau \qquad \text{for every $t\in [t^n_{k-1}, t^n_k]$}. 
	\end{aligned}
\end{equation}
\end{subequations}

\medskip
Given the sequence $\{u^n_k\}_{k=1}^n$, we define the approximate solution $u^n: [0,T] \times \R^d \to \R^m $ as
\begin{equation}\label{eq: def u^n}
u^n(t,x) :=
\begin{cases}
    u^n_k(t,x) &\text{if } t \in  [t^n_{k-1}, t^n_k) \text{ for some } k \in \{1, \dots, n\}, 
    \\
    u^n_n(T,x) &\text{if } t = T.
\end{cases}
\end{equation}
From \eqref{eq:uknproperties}, we deduce that
\begin{subequations}\label{eq:unproperties}
	\begin{equation}
		\begin{aligned}
			u^n \in L^\infty(0,T;\tilde{H}^s(\Omega_{T})), \text{ and }
			\dot{u}^n \in L^\infty(0,T;L^2(\Omega_{T})),
		\end{aligned}
	\end{equation}
	with initial time conditions
	\begin{equation}\label{eq:ICun}
		\begin{aligned}
			u^n(0)&=u^n_{1}(0) = u_0 \quad\text{in the sense of } \tilde{H}^s(\Omega_0),
			\\
			\dot{u}^n(0)&= \dot{u}^n_{1}(0) = v_0 \quad\text{in the sense of } L^2(\Omega_0).
		\end{aligned}
	\end{equation}
	Summing \eqref{eq:weakunk} for $k = 1, \dots, n$, and using that $\dot{u}^n_k(t^n_{k-1})=\dot{u}^n_{k-1}(t^n_{k-1})$ to simplify the boundary terms, we obtain that
	\begin{equation}\label{eq: weak u^n}
		\begin{aligned}
			&-\int_{0}^{T}\scal{\dot{u}^n(t)}{\dot{\varphi}(t)}_{L^2(\Omega_{T})} \, dt +\int_{0}^{T}[u^n(t), \varphi(t)]_s \,dt +\int_{0}^{T} \scal{\nabla W(u^n(t))}{\varphi(t)}_{L^2(\Omega_{T})} \,dt \\
			&= \int_{0}^{T}\scal{f(t)}{\varphi(t)}_{L^2(\Omega_{T})} \,dt \\
			& \text{for every $\varphi\in C^{\infty}_{c}((0,T) \times \Omega_T)$ with }\supp \varphi \subseteq \bigcup_{k=1}^{n} [t^n_{k-1}, t^n_k] \times \Omega_{t^n_{k-1}}.
		\end{aligned}
	\end{equation}
	Furthermore, the energy estimate \eqref{ineq: en bal u^n_k} provides that
	\begin{equation}\label{ineq: en bal u^n 2}
		\begin{aligned}
			E(u^n(t)) &=\frac12 \norm{\dot{u}^n(t)}^2_{L^2(\Omega_{t})}+\frac12 [u^n(t)]_{s}^2+\|W(u^n(t))\|_{L^1(\Omega_{t})} \\
			&\leq \frac12 \norm{v_0}^2_{L^2(\Omega_{0})} +\frac12 [u_0]_{s}^2+\|W(u_0)\|_{L^1(\Omega_{0})} + \int_{0}^{t} \scal{f(\tau)}{\dot{u}^n(\tau)}_{L^2(\Omega_{\tau})} \,d\tau \\
			& =E_0 + \int_{0}^{t} \scal{f(\tau)}{\dot{u}^n(\tau)}_{L^2(\Omega_{\tau})} \,d\tau  \qquad \text{for every $t\in [0, T]$}. 
		\end{aligned}
	\end{equation}
\end{subequations}
Indeed, let $t\in[0,T]$ and let $k \in \{1,\dots,n\}$ such that $t^n_{k-1} < t \leq t^n_k$. Then, we estimate
\begin{equation}\label{ineq: en bal u^n}
	\begin{aligned}
		E(u^n(t)) &=\frac12 \norm{\dot{u}^n(t)}^2_{L^2(\Omega_{t})}+\frac12 [u^n(t)]_{s}^2+\|W(u^n(t))\|_{L^1(\Omega_{t})} \\
		&= \frac12 \norm{\dot{u}^n_k(t)}^2_{L^2(\Omega_{t^n_{k-1}})}+\frac12 [u^n_k(t)]_{s}^2 + \|W(u^n_k(t))\|_{L^1(\Omega_{t^n_{k-1}})} + \underbrace{\|W(0)\|_{L^1(\Omega_t \setminus\Omega_{t^n_{k-1}})}}_{=0}
		\\
		&\overset{\eqref{eq:ICukn}, \eqref{ineq: en bal u^n_k}}{\leq} \underbrace{E(u^n_{k-1}(t^n_{k-1}))}_{=E(u^n(t_{k-1}^n))} +\int_{t^n_{k-1}}^{t}\scal{f(\tau)}{\dot{u}^n_k(\tau)}_{L^2(\Omega_{\tau})}
		\,d\tau \qquad \text{for every } t \in [t_{k-1}^n, t_k^n],
	\end{aligned}
\end{equation}
where for every $\tau \in (t^n_{k-1}, t^n_k]$ we can interchange the spaces $L^2(\Omega_{\tau})$ and $L^2(\Omega_{t^n_{k-1}})$ because we have $\Omega_{t^n_{k-1}} \subseteq \Omega_\tau$ and $\dot{u}^n_k(\tau) = 0$ in $\Omega_\tau \setminus \Omega_{t^n_{k-1}}$. Hence, \eqref{ineq: en bal u^n 2} follows by iterating \eqref{ineq: en bal u^n}.

\medskip
\noindent
\emph{Step 2. Pre-compactness results.} It follows from \eqref{ineq: en bal u^n 2} that for every $t \in [0,T]$ we have
\begin{equation}\label{eq:step0gronwall}
\begin{aligned}
	&\frac12 \norm{\dot{u}^n(t)}^2_{L^2(\Omega_{T})} \leq E_0 + \int_{0}^{t} \scal{f(\tau)}{\dot{u}^n(\tau)}_{L^2(\Omega_{T})} \,d\tau,
	\\
	& \frac12 [u^n(t)]_{s}^2 \leq E_0 + \int_{0}^{t} \scal{f(\tau)}{\dot{u}^n(\tau)}_{L^2(\Omega_{T})} \,d\tau.
\end{aligned}
\end{equation}

We now proceed with a classical Gr\"onwall's argument. By Cauchy inequality, we infer that
\begin{align*}
    \frac12 \norm{\dot{u}^n(t)}^2_{L^2(\Omega_{T})} &\leq \underbrace{E_0 + \frac{1}{2}\int_{0}^{T} \norm{f(\tau)}^2_{L^2(\Omega_{T})} \,d\tau}_{=:\,C_0} + \frac{1}{2}\int_{0}^{t} \norm{\dot{u}^n(\tau)}^2_{L^2(\Omega_{T})} \,d\tau
    \\
    &=C_0 +  \frac{1}{2}\int_{0}^{t} \norm{\dot{u}^n(\tau)}^2_{L^2(\Omega_{T})} \,d\tau. 
\end{align*}
By Gr\"onwall's inequality (see, e.g., \cite{L.C.Evans}), we obtain
\begin{equation*}
    \frac12 \norm{\dot{u}^n(t)}^2_{L^2(\Omega_{T})} \leq  C_0(1 + T\textup{e}^{T}) \quad \text{for every } t \in [0,T].
\end{equation*}
Hence, from \eqref{eq:step0gronwall} we get
\begin{equation}\label{ineq: gron argument}
    \sup_{t \in [0,T]} \left( \norm{\dot{u}^n(t)}^2_{L^2(\Omega_{T})} + [u^n(t)]_{s}^2  \right) \leq C,
\end{equation}
for a suitable positive constant $C$ independent of $n$. In other words, $\{u^n\}_n$ is equi-bounded in $L^\infty(0,T; \tilde{H}^s(\Omega_T))$ and $\{\dot{u}^n\}_n$ in equi-bounded in $L^\infty(0,T; L^2(\Omega_T))$.

Furthermore, $\{\ddot{u}^n\}_n$ is equi-bounded in $L^\infty(0,T; H^{-s}(\Omega_0))$. Indeed, we first note that by \eqref{eq: def u^n} and \eqref{eq:spacesukn}, we have that every $\ddot{u}^n \in L^\infty(0,T; H^{-s}(\Omega_0))$ for every $n \in \N$. In particular, we aim to establish the following inequality:
\begin{equation}\label{eq: ddot u^n equibound}
    \left| \int_{0}^{T} \langle \ddot{u}^n(t), \varphi(t) \rangle_{H^{-s}(\Omega_{0}), \tilde{H}^s(\Omega_{0})} \,dt \right| \leq D \norm{\varphi}_{L^{1}(0,T;\tilde{H}^s(\Omega_0))} \quad \text{for every } \varphi \in L^{1}(0,T;\tilde{H}^s(\Omega_0)), 
\end{equation}
for a suitable positive constant $D$ independent of $n$ and $t$. Hence, consider a smooth function $\varphi \in C_c^{\infty}((0,T)\times \Omega_0)$. Since $\varphi$ is an admissible choice as test function in \eqref{eq: weak u^n}, we obtain that
\begin{equation}\label{eq: ddot u^n}
		\begin{aligned}
        & \int_{0}^{T} \langle \ddot{u}^n(t), \varphi(t) \rangle_{H^{-s}(\Omega_{0}), \tilde{H}^s(\Omega_{0})} \,dt 
        = -\int_{0}^{T}\scal{\dot{u}^n(t)}{\dot{\varphi}(t)}_{L^2(\Omega_{0})} \, dt 
        \\
        &= -\int_{0}^{T}[u^n(t), \varphi(t)]_s \,dt 
        - \int_{0}^{T} \scal{\nabla W(u^n(t))}{\varphi(t)}_{L^2(\Omega_{0})} \,dt
		+\int_{0}^{T}\scal{f(t)}{\varphi(t)}_{L^2(\Omega_{0})} \,dt.
		\end{aligned}
\end{equation}
Now we estimate the three terms in the second line above.

\smallskip
For the first term, one has the inequality
\begin{equation*}
\begin{split}
    \left|\int_{0}^{T}[u(t), \varphi(t)]_s \,dt\right|
    &\leq  \int_{0}^{T} \left| [u^n(t), \varphi(t)]_s \right| \,dt 
    \\
    \text{\footnotesize (by Cauchy Schwarz for $[\cdot,\cdot]_s$)} \qquad &\leq  \int_{0}^{T} [u^n(t)]_s \, [\varphi(t)]_s \,dt 
    \\
    \text{\footnotesize (by the inequality $[\cdot]_s \leq \|\cdot\|_s$)} \qquad & \leq \underbrace{\sqrt{C}}_{D_1} \norm{\varphi}_{L^1(0,T;\tilde{H}^s(\Omega_0))} ,
\end{split}
\end{equation*}
where we used \eqref{ineq: gron argument} to get $[u^n(t)]_s \leq \sqrt{C}$ for every $t \in [0,T]$.

\smallskip
For the second term, it suffices to consider the estimate
\begin{equation*}
\begin{split}
    \left| \int_{0}^{T} \scal{\nabla W(u^n(t))}{\varphi(t)}_{L^2(\Omega_{0})} \,dt 
    \right| 
    & \leq \int_{0}^{T}  \left| \scal{\nabla W(u^n(t))}{\varphi(t)}_{L^2(\Omega_{0})} \right| \,dt 
    \\
    \text{\footnotesize (by Cauchy Schwarz in $L^2(\Omega_0)$)} \quad  &\leq \int_{0}^{T}  \norm{\nabla W(u^n(t))}_{L^2(\Omega_{0})} \norm{\varphi(t)}_{L^2(\Omega_{0})}  \,dt 
    \\
    & \leq \norm{\nabla W(u^n)}_{L^{\infty}(0,T;L^2(\Omega_{0}))} \int_{0}^{T} \norm{\varphi}_{L^2(\Omega_{0})}  \,dt 
    \\
    & \leq D_2 \norm{\varphi}_{L^{1}(0,T;L^2(\Omega_{0}))}, 
\end{split}
\end{equation*}
where the constant $D_2$ is given by
\begin{align*}
    \esssup_{t \in (0,T)} \norm{\nabla W(u^n(t))}_{L^2(\Omega_{0})}  \,dt & \overset{\eqref{ineq for nabla W(y)}}{\leq}  \esssup_{t \in (0,T)} \left( C_W \int_{\Omega_0} (1+|u^n(t,x)|)^2 \,dx \right)^\frac{1}{2}
    \\
    \text{\footnotesize (using $(a+b)^2 \leq 2a^2+2b^2$)} \quad
    & \leq \underbrace{  \sqrt{2C_W} \left( \mathcal{L}^d(\Omega_0) + \norm{u^n}^2_{L^\infty(0,T;L^2(\Omega_T))} \right)^\frac{1}{2}}_{D_2}
\end{align*}
where the uniform bound of $u^n$ in $L^\infty(0,T;L^2(\Omega_T))$ follows again from \eqref{ineq: gron argument}.

\smallskip
For the third term, one has (again by Cauchy Schwarz in $L^2(\Omega_0)$)
\begin{equation*}
\begin{split}
    \left|\int_{0}^{T} \langle f(t), \varphi(t) \rangle_{L^2(\Omega_0)}\, dt \right| &\leq \int_{0}^{T} \norm{f(t)}_{L^2(\Omega_0)} \norm{\varphi(t)}_{L^2(\Omega_0)} \, dt
    \\
    &\overset{\eqref{eq:f}}{\leq} \underbrace{\norm{f}_{L^{\infty}(0,T;L^2(\Omega_{0}))} }_{D_3} \norm{\varphi}_{L^{1}(0,T;L^2(\Omega_{0}))}. 
\end{split}
\end{equation*}

\smallskip
\noindent
Hence, the estimates above yield the existence of positive constants $D_1,D_2, D_3$ such that 
\begin{equation}\label{ineq: D_1 D_2 D_3}
    \begin{aligned}
    \left|\int_{0}^{T}[u^n(t), \varphi(t)]_s \,dt\right| & \leq D_1 \norm{\varphi}_{L^1(0,T; \tilde{H}^s(\Omega_0))},
    \\
    \left| \int_{0}^{T} \scal{\nabla W(u^n(t))}{\varphi(t)}_{L^2(\Omega_{0})} \,dt \right|  & \leq D_2 \norm{\varphi}_{L^1(0,T; \tilde{H}^s(\Omega_0))},
    \\
    \left| \int_{0}^{T}\scal{f(t)}{\varphi(t)}_{L^2(\Omega_{0})} \,dt \right| & \leq D_3 \norm{\varphi}_{L^1(0,T; \tilde{H}^s(\Omega_0))}.
\end{aligned}
\end{equation}
By the density of $C_c^{\infty}((0,T)\times \Omega_0)$ in $L^1(0,T; \tilde{H}^s(\Omega_0))$, \eqref{eq: ddot u^n} and \eqref{ineq: D_1 D_2 D_3} imply the validity of \eqref{eq: ddot u^n equibound}. Hence the equi-boundedness of $\{\ddot{u}^n\}_n$ in $L^\infty(0,T; H^{-s}(\Omega_0))$ holds.

\medskip
\noindent
\emph{Step 3. Convergence of the approximants.}
By \eqref{ineq: gron argument}, there exist $u \in L^\infty(0,T; \tilde{H}^s(\Omega_T))$ and $v \in L^\infty(0,T; L^2(\Omega_T))$  such that
\begin{equation}\label{eq: convergence u^n to u}
    \begin{aligned}
    & u^n \rightharpoonup^* u \text{ in } L^\infty(0,T; \tilde{H}^s(\Omega_T)),
    \\
    & \dot{u}^n \rightharpoonup^* v \text{ in } L^\infty(0,T; L^2(\Omega_T)).
\end{aligned}
\end{equation}
It is standard to prove that $v = \dot{u}$ in $L^\infty(0,T;L^2(\Omega_T))$ and, since 
\begin{equation*}
    \supp u^n \subseteq \bigcup_{k=1}^{n} [t^n_{k-1}, t^n_k] \times \Omega_{t^n_{k-1}} \subset \mathcal{O},    
\end{equation*} 
we conclude that $u \in L^\infty(0,T; \tilde{H}^s(\Omega_t))$ and $\dot{u} \in L^\infty(0,T; L^2(\Omega_t))$ (hence (i) in Definition \ref{defweaksolu} holds). Moreover, by the equi-boundedness of $\{\ddot{u}^n\}_n$ in $L^\infty(0,T; H^{-s}(\Omega_0))$ together with an Ascoli--Arzelà argument for the family $\{\dot u^n\}_n$ in $C^0([0,T]; H^{-s}(\Omega_0))$ and for the family $\{u^n\}_n$ in $C^0([0,T]; L^2(\Omega_T))$, we conclude that
\begin{equation}\label{eq: convergence u^n to u in C0}
\begin{aligned}
    & u^n \to u \text{ in } C^0([0,T]; L^2(\Omega_T)),
    \\
    & \dot{u}^n \to \dot{u} \text{ in } C^0([0,T]; H^{-s}(\Omega_0)).
\end{aligned}
\end{equation}
Furthermore, by \cite[Lemma 8.1]{lionsmagenes} we have that
\begin{equation*}
    L^\infty(0,T;\tilde{H}^s(\Omega_T)) \cap C^0([0,T]; L^2(\Omega_T)) \subseteq \cscal([0,T]; \tilde{H}^s(\Omega_T)), 
\end{equation*}
hence we also obtain $u \in \cscal([0,T]; \tilde{H}^s(\Omega_T))$. In a similar way, one observes that
\begin{equation*}
    L^\infty(0,T;L^2(\Omega_0)) \cap C^0([0,T]; H^{-s}(\Omega_0)) \subseteq \cscal([0,T]; L^2(\Omega_0)), 
\end{equation*}
hence $\dot{u} \in \cscal([0,T]; L^2(\Omega_{0}))$. 
In particular, using \eqref{eq:ICun}, the convergence provided by \eqref{eq: convergence u^n to u in C0} and the embeddings above, one obtains that (ii) in Definition \ref{defweaksolu} holds.

In order to conclude that $u$ is indeed a weak solution, we are left to prove that $u$ satisfies \eqref{def eq u weak sol}. We reason by density, hence proving the weak formulation for smooth functions. To this aim, we first observe that given a test function 
\begin{equation}\label{eq varphi smooth test}
    \varphi \in C^{\infty}_{c}((0,T)\times \Omega_T) \text{ with } \supp \varphi \subset \mathcal{O},
\end{equation}
then there exists $\bar{n} \in \N$ sufficiently large such that for all $n \geq \bar{n}$ it holds
\begin{equation*}
    \supp \varphi \subset \bigcup_{k=1}^{n} [t^{n}_{k-1}, t^{n}_k] \times \Omega_{t^{n}_{k-1}} \subset \mathcal{O}
\end{equation*}
(for a proof of the Hausdorff convergence of the increasing union of cylindrical domains to the set $\mathcal{O}$, see, for instance, \cite[Lem. 3.4]{CalvoNovagaOrlandi}).
Hence, we can take $\varphi^n = \varphi$ for all $n \geq \bar{n}$ in \eqref{eq: weak u^n}, namely 
\begin{equation*}
	\varphi \in C^{\infty}_c((0,T)\times \Omega_T), \text{ with } \supp \varphi \subseteq \bigcup_{k=1}^{n} [t^n_{k-1}, t^n_k] \times \Omega_{t^n_{k-1}},
\end{equation*} 
and then \eqref{def eq u weak sol} holds for every $\varphi$ as in \eqref{eq varphi smooth test} passing to the limit in \eqref{eq: weak u^n}: indeed, we have that
\begin{subequations}\label{subeq: lim equation}
    \begin{align}
        \label{subeq: lim equation 1} \lim_{n \to \infty} \int_{0}^{T}\scal{\dot{u}^n(t)}{\dot{\varphi}(t)}_{L^2(\Omega_{T})} \, dt &= \int_{0}^{T}\scal{\dot{u}(t)}{\dot{\varphi}(t)}_{L^2(\Omega_{T})} \, dt
        \\
        \label{subeq: lim equation 2} \lim_{n \to +\infty}\int_{0}^{T}[u^n(t), \varphi(t)]_s \,dt &= \int_{0}^{T}[u(t), \varphi(t)]_s \,dt,
        \\
        \label{subeq: lim equation 3} \lim_{n \to +\infty}\int_{0}^{T} \scal{\nabla W(u^n(t))}{\varphi(t)}_{L^2(\Omega_{T})} \,dt &= \int_{0}^{T} \scal{\nabla W(u(t))}{\varphi(t)}_{L^2(\Omega_{T})} \,dt.  
    \end{align}
\end{subequations}
As before we proceed by steps to verify \eqref{subeq: lim equation}.

\smallskip
For the first limit \eqref{subeq: lim equation 1} it suffices to consider the estimate
\begin{equation*}
\begin{split}
    \left| \int_{0}^{T} \langle \dot{u}^n(t), \dot{\varphi}(t) \rangle_{L^2(\Omega_T)} \, dt 
    \right.&\left.- 
    \int_{0}^{T} \langle \dot{u}(t), \dot{\varphi}(t) \rangle_{L^2(\Omega_T)} \, dt \right|
    \\
    &\leq \left| \int_{0}^{T} \langle \dot{u}^n(t) -\dot{u}(t), \dot{\varphi}(t) \rangle_{L^2(\Omega_T)} \, dt \right|, 
\end{split}
\end{equation*}
and the convergence $\dot{u}^n \rightharpoonup^* \dot{u}$ in $L^\infty(0,T; L^2(\Omega_T))$ (using $\dot{\varphi} \in C^{\infty}_c((0,T)\times \Omega_T) \subset  L^1(0,T;L^2(\Omega_T))$ as test function for the convergence in \eqref{eq: convergence u^n to u}).

\smallskip
For the second limit \eqref{subeq: lim equation 2}, we note that the regularity of $\varphi$ implies 
\[(-\Delta)^s \varphi \in L^1(0,T; H^{-s}(\Omega_T)),\] 
hence we get
\begin{align*}
    \left|\int_{0}^{T}[u^n(t), \varphi(t)]_s \,dt - \int_{0}^{T}[u(t), \varphi(t)]_s \,dt
    \right| \leq \left| \int_{0}^{T}[u^n(t) - u(t), \varphi(t)]_s \,dt  \right| &
    \\
    \leq \left| \int_{0}^{T} \scal{(- \Delta)^s\varphi(t)}{u^n(t) - u(t)}_{{H^{-s}(\Omega_{T}), \tilde{H}^s(\Omega_{T})}} \,dt \right|&,
\end{align*}
and the conclusion follows by the convergence ${u}^n \rightharpoonup^* {u}$ in $L^\infty(0,T; \tilde{H}^s(\Omega_T))$ (using $(- \Delta)^s\varphi \in L^1(0,T;H^{-s}(\Omega_T))$ as test function for the convergence in \eqref{eq: convergence u^n to u}).

\smallskip
Finally, for the third limit \eqref{subeq: lim equation 3}, it suffices to consider
\begin{align*}
    &\left| \int_{0}^{T}\langle \nabla W(u^n(t)), \varphi(t) \rangle_{L^2(\Omega_{T})} \,dt - \int_{0}^{T} \scal{\nabla W(u(t))}{\varphi(t)}_{L^2(\Omega_{T})} \,dt 
    \right|
    \\
    &\qquad\qquad\leq  \left|\int_{0}^{T} \scal{\nabla W(u^n(t)) - \nabla W(u(t)) }{\varphi(t)}_{L^2(\Omega_{T})}  \,dt \right|
    \\
    &\qquad\qquad \leq \int_{0}^{T} \norm{\nabla W(u^n(t)) - \nabla W(u(t))}_{L^2(\Omega_{T})} \norm{\varphi(t)}_{L^2(\Omega_{T})}  \,dt
    \\
    &\qquad\qquad \leq \sqrt{T} \norm{\varphi}_{L^\infty(0,T;L^2(\Omega_{T}))} \left( \int_{0}^{T} \norm{\nabla W(u^n(t)) - \nabla W(u(t))}^2_{L^2(\Omega_{T})} \,dt \right)^\frac{1}{2}
    \\
    &\qquad\qquad \leq  C_1 \sqrt{T} \norm{\varphi}_{L^\infty(0,T;L^2(\Omega_{T}))} \norm{u^n-u}_{C^0([0,T];L^2(\Omega_T))}, 
\end{align*}
where we have used Cauchy Schwarz both in $L^2(\Omega_T)$ and on $(0,T)$ and the positive constant $C_1$ is given by
\begin{align*}
    \left(\int_{0}^{T} \norm{\nabla W(u^n(t)) - \nabla W(u(t))}^2_{L^2(\Omega_{T})} \right)^\frac{1}{2} \,dt &\overset{\eqref{eq: nabla W}}{\leq} \left(\int_{0}^{T} \left( \int_{\Omega_T} \mathrm{K}^2 |u^n(t) - u(t)|^2 \,dx \right) \,dt  \right)^\frac{1}{2}
    \\
    & \,\, \leq \underbrace{\mathrm{K} \sqrt{T}}_{C_1} \norm{u^n-u}_{C^0([0,T];L^2(\Omega_T))}.
\end{align*}
Hence, the convergence of $u^n$ to $u$ provided by \eqref{eq: convergence u^n to u in C0} yields the validity of \eqref{subeq: lim equation 3}. Thus, \eqref{subeq: lim equation} holds.

\smallskip
To conclude the proof of \eqref{def eq u weak sol}, we just note that the space
\[
\{\varphi \in C^{\infty}_{c}((0,T)\times \Omega_T) \colon \, \supp \varphi \subset \mathcal{O} \}
\]
is dense in the space
\[
\{ \varphi \in L^1(0,T;\tilde{H}^s(\Omega_T)) \cap W^{1,1}(0,T; L^2(\Omega_T)) \colon \, \supp \varphi \subset \mathcal{O} \}
\]
(a density result in the same spirit can be found in \cite[Lemma 2.8]{dal2019cauchy}). Therefore, the validity of (iii) in Definition \ref{defweaksolu} has been established.

Finally, we are left to prove the energy inequality \eqref{eq: en ineq for u}. We first note that by inequality \eqref{ineq for W(x)-W(y)} in Remark \ref{property of W}, one can easily deduce that for a.e. $t \in [0,T]$ 
\begin{equation}\label{eq: W u^n - W u}
    \norm{W(u^n(t)) - W(u(t))}_{L^1(\Omega_t)}
    \leq \tilde{C} \norm{u^n(t) - u(t)}_{L^2(\Omega_T)}
\end{equation} 
for a suitable constant $\tilde{C} >0$ independent of $t$ and $n$, due to the boundedness of $u^n$ and $u$ in $L^\infty(0,T;L^2(\Omega_T))$ (see \cite[Prop. 8 (iii)]{bonafini2021obstacle}).
We now integrate \eqref{ineq: en bal u^n 2} between arbitrary times $\alpha,\beta$ with $0 \leq \alpha \leq \beta \leq T$. By \eqref{eq: convergence u^n to u} and \eqref{eq: convergence u^n to u in C0} and by standard lower semicontinuity
arguments along with estimate \eqref{eq: W u^n - W u}, as $n \to + \infty$ we get
\begin{align*}
&\int_{\alpha}^{\beta} \frac12 \norm{\dot{u}(t)}^2_{L^2(\Omega_{t})}+\frac12 [u(t)]_{s}^2+\|W(u(t))\|_{L^1(\Omega_{t})} \, dt
\\
&\leq (\beta-\alpha) \left( \frac12 \norm{v_0}^2_{L^2(\Omega_{0})} +\frac12 [u_0]_{s}^2+\|W(u_0)\|_{L^1(\Omega_{0})} \right) + \int_{\alpha}^{\beta} \int_{0}^{t} \scal{f(\tau)}{\dot{u}^n(\tau)}_{L^2(\Omega_{\tau})} \,d\tau \, dt.
\end{align*}
By the arbitrariness of $\alpha$ and $\beta$, the above inequality yields the validity of \eqref{eq: en ineq for u} for a.e. $t \in [0,T]$.

In order to provide the energy inequality for every time, we note that the energy estimates can be replicated in order to obtain the following: $\dot{u} \in \cscal([\overline{t},T]; L^2(\Omega_{\overline{t}}))$ for every $\overline{t} \in [0,T)$ (cf. the existence result in the Appendix \ref{appendix}). Hence, we fix a time $\overline{t} \in [0,T)$ and consider a decreasing sequence $t_k \searrow \overline{t}$ along which the energy inequality is satisfied (which is possible since we already proved it holds a.e. in $[0,T]$). Then, again by weak lower semicontinuity arguments along with estimate \eqref{eq: W u^n - W u}, we deduce that
\begin{equation*}
\begin{split}
    &\frac12 \|\dot{u}(\overline{t})\|^{2}_{L^{2}(\Omega_{\overline{t}})}+\frac12 [u(\overline{t})]_{s}^2+\|W(u(\overline{t}))\|_{L^{1}(\Omega_{\overline{t}})} 
    \\
    &\leq \liminf_{k \to \infty} \left( \frac12 \|\dot{u}(t_k)\|^{2}_{L^{2}(\Omega_{t_k})}+\frac12 [u(t_k)]_{s}^2+\|W(u(t_k))\|_{L^{1}(\Omega_{t_k})}  \right) 
    \\
    &\leq
    E_0 + \int_{0}^{\overline{t}}\scal{f(\tau)}{\dot{u}(\tau)}_{L^2(\Omega_\tau)}\, d\tau.
\end{split}
\end{equation*}
Hence, \eqref{eq: en ineq for u} holds for every $\overline{t} \in [0,T)$. Finally, the case $\overline{t}= T$ can be treated just by considering a final time $\tilde{T}>T$, setting $\Omega_t:= \Omega_T$ for every $t \in [T,\tilde{T}]$ and arguing as above. This concludes the proof.
\end{proof}

\section{Penalty method}\label{sec: penalty}
In this section, we provide another proof of Theorem \ref{thm main result} by means of a penalty argument, whose idea goes back to \cite{Lions64}. The strategy consists of first considering a parameter $\varepsilon > 0$ and constructing an approximate solution $u_{\varepsilon}$ (obtained by the standard Galerkin method) to problem \eqref{eq:u}, modified by adding a penalty term of the form $\varepsilon^{-1} \mathscr{P} \dot{u}$. Here, $\mathscr{P}$ is the characteristic function of the complement of the set $\mathcal{O}$. The family $\{u_\varepsilon\}_\varepsilon$ admits a cluster point $u$, which solves \eqref{eq:u}; the penalization term forces $u$ to vanish outside $\mathcal{O}$.
We mentioned that, in the same spirit, other types of penalties are possible: see for example \cite{zolesio1990approximation}, where the (linear) wave equation for the Laplace operator in non-cylindrical domains is treated.

\begin{proof}[Proof of Theorem \ref{thm main result}]
$\,$

\medskip
\noindent
\emph{Step 1. Penalized problems at level $\varepsilon$.}
For every fixed $\varepsilon > 0$, we consider the following formal penalized problem 
\begin{equation}\label{eq:peneq}
\begin{cases}
\ddot{u} + (-\Delta)^s u + \nabla W(u) + \frac{1}{\varepsilon} \mathscr{P} \dot{u} = f & \text{in } (0,T) \times \Omega_T,
\\
u = 0  &\text{in } (0,T) \times (\R^d \setminus\Omega_T),
\end{cases}
\end{equation}
with initial conditions given by $u(0)=u_0$ and $\dot{u}(0)=v_0$, and $\mathscr{P}$ given by
\begin{equation*}
\mathscr{P} (t,x)=\begin{cases}
0 &\text{ if } (t,x) \, \in \, \mathcal{O},
\\
1 & \, \text{otherwise}.
\end{cases}
\end{equation*}
We aim to prove the existence of a weak solution $u_{\varepsilon} \colon [0,T] \times \R^d \to \R^m$ to problem \eqref{eq:peneq}, in the sense that
\begin{equation}\label{eq: weak u eps penalized}
\begin{aligned}
-\int_{0}^{T} \langle \dot{u}_{\varepsilon}(t), \dot{\varphi}(t) \rangle_{L^2(\Omega_T)} \, dt
+\int_{0}^{T}[u_{\varepsilon}(t), \varphi(t)]_s \,dt 
+\int_{0}^{T} \scal{\nabla W(u_{\varepsilon}(t))}{\varphi(t)}_{L^2(\Omega_{T})} \,dt 
\\ + \frac{1}{\varepsilon} \int_{0}^{T} \scal{\mathscr{P}(t) \, \dot{u}_{\varepsilon}(t)}{\varphi(t)}_{L^2(\Omega_{T})} \,dt
= \int_{0}^{T} \langle f(t), \varphi(t) \rangle_{L^2(\Omega_T)}\, dt,
\end{aligned}
\end{equation}
for every $\varphi \in \mathcal{X}$, where
\[
\mathcal{X} := \{ \varphi \in L^1(0,T;\tilde{H}^s(\Omega_T)) \cap W^{1,1}(0,T; L^2(\Omega_T)) \colon \, \varphi(0)=\varphi(T)=0 \}.
\]
Moreover, $u_{\varepsilon}$ is also required to satisfy the energy inequality
\begin{equation}\label{energy epsilon level}
\begin{aligned}
\frac{1}{2}\norm{\dot{u}_{\varepsilon}(t)}^2_{L^2(\Omega_{T})} + \frac{1}{2}[u_{\varepsilon}(t)]_{s}^2 +\norm{W(u_\varepsilon)}_{L^1(\Omega_{T})} 
+\frac{1}{\varepsilon}\int_0^t\int_{\Omega_T} \mathscr{P}(\tau,x) |\dot{u}_{\varepsilon}(\tau,x)|^2 \,dxd\tau
\\
\leq E_0 + \int_{0}^{t}\scal{f(\tau)}{\dot{u}_\varepsilon(\tau)}_{L^2(\Omega_T)}\, d\tau,
\end{aligned}
\end{equation}
where $E_0$ is given by Definition \ref{def energy}. Existence of such a solution $u_\varepsilon$ is obtained via a Galerkin approximation, hence we need to introduce a sequence of approximating spaces.

\medskip
\noindent
\emph{Step 2. Finite dimensional approximating spaces.} Let $\lbrace w_{k} \rbrace_{k\in \N}$ be an orthogonal basis of $\tilde{H}^{s}(\Omega_T)$, and at the same time an orthonormal basis of $L^2(\Omega_T)$. More specifically, we choose the eigenfunctions of the fractional Laplacian on $\Omega_T$, namely
\begin{equation*}
    \begin{cases}
        (-\Delta)^s w_k = \lambda_k w_k &\text{on } \Omega_T,
        \\
        w_k \in \tilde{H}^s(\Omega_T),
    \end{cases}
\end{equation*}
which are orthogonal in $\tilde{H}^s(\Omega_T)$ and orthonormal in $L^2(\Omega_T)$
Note that, if $\varphi \in \tilde{H}^s(\Omega_T)$, then for all $k \in \N$ it holds
\begin{equation*}
    [\varphi,w_k]_s = \int_{\R^{d}} (-\Delta)^{s/2}\varphi(x) (-\Delta)^{s/2}w_k(x) \, dx = \int_{\R^{d}} \varphi(x) (-\Delta)^{s}w_k(x) \, dx = \lambda_k \langle \varphi, w_k \rangle_{L^2(\Omega_T)}.
\end{equation*}
Hence, for all $k \in \N$ we have
\begin{equation}\label{eq: frequency phi w_j}
    \langle \varphi, w_k \rangle_{L^2(\Omega_T)} = \lambda^{-1}_k [\varphi,w_k]_s \qquad \text{for every } \varphi \in \tilde{H}^s(\Omega_T).
\end{equation}
In particular, \eqref{eq: frequency phi w_j} yields $[w_k]^2_s = [w_k,w_k]_s = \lambda_k$ for every $k \in \N$.

\smallskip
For each $M > 0$, define $V_M \subset \tilde{H}^{s}(\Omega_T)$ to be the subspace 
\[
V_M := \textup{span} \,\{w_1, \dots, w_M\}.
\]
Let $\pi_M$ be the $L^2(\Omega_T)$-orthogonal projection onto the subspace $V_M$, namely
\begin{equation*}
	\pi_M: \tilde{H}^s(\Omega_T) \to V_M, \, \phi \mapsto \pi_M(\phi):= \sum_{k = 1}^{M} \langle \phi, w_k \rangle_{L^2(\Omega_T)} w_k.    
\end{equation*}
Note that for any $\phi \in \tilde{H}^s(\Omega_T)$, it holds that
\begin{equation*}
	\begin{split}
		\norm{\phi - \pi_M(\phi)}^2_{L^2(\Omega_T)} = \norm{ \sum_{k = M+1}^{\infty} \langle \phi, w_k \rangle_{L^2(\Omega_T)} w_k }^2_{L^2(\Omega_T)} \leq \sum_{k = M+1}^{\infty} |\langle \phi, w_k \rangle_{L^2(\Omega_T)}|^2 \norm{w_k}^2_{L^2(\Omega_T)}
		\\
		= \sum_{k = M+1}^{\infty} \frac{1}{\lambda_k} \lambda_k |\langle \phi, w_k \rangle_{L^2(\Omega_T)}|^2 \leq \frac{1}{\lambda_M} 
		\sum_{k = 1}^{\infty} \lambda_k |\langle \phi, w_k \rangle_{L^2(\Omega_T)}|^2 =  \frac{1}{\lambda_M} [\phi]_s^2 \, \leq  \frac{1}{\lambda_M} \norm{\phi}_s^2 \, ,
	\end{split}
\end{equation*} 
so that we have the decay estimate
\begin{equation}\label{eq: estimate phi - pi_n phi}
	\norm{\phi - \pi_M(\phi)}_{L^2(\Omega_T)} \leq \frac{1}{\sqrt{\lambda_M}} \norm{\phi}_s \qquad \text{for every } \phi \in \tilde{H}^s(\Omega_T).
\end{equation}

\medskip
\noindent
\emph{Step 3. Galerkin-penalized approximants $u_{M,\varepsilon}$.} 
Now, let $\{a^k\}_{k \in \N}$ and $\{b^k\}_{k\in \N}$ be the coefficients of $u_0$ and $v_0$ with respect to the given basis $\lbrace w_{k} \rbrace_{k \in \N}$. For every $M > 0$, define
\begin{equation*}
\begin{aligned}
	 u_{0,M} := \sum_{k=1}^{M} a^k w_k
	 \quad\text{and}\quad
	 v_{0,M} := \sum_{k=1}^{M} b^k w_k.
\end{aligned}
\end{equation*}
We seek for a $V_M$-valued weak solution $u_{M,\varepsilon}$ to \eqref{eq:peneq}, namely we seek for a coefficients function $d_{M,\varepsilon} \colon [0,T] \to \mathbb{R}^M, t \mapsto (d^1_{M,\varepsilon}(t),\dots,d^M_{M,\varepsilon}(t))$ such that the function given by 
\begin{equation}\label{eq: def u M eps}
    u_{M,\varepsilon}(t,x)= \sum_{k=1}^M \, d^k_{M,\varepsilon}(t)\, w_k(x) \qquad \text{for }t \in [0,T], x \in \Omega_T,
\end{equation}
is a solution of  
\begin{equation}\label{eq:weakeq}
\begin{split}
    \left\langle \ddot{u}_{M,\varepsilon}(t), \phi \right\rangle_{L^2(\Omega_{T})}   +[u_{M,\varepsilon}(t), \phi]_{s} & + \left\langle \nabla W(u_{M,\varepsilon}(t)), \phi \right\rangle_{L^2(\Omega_{T})}
    \\
    & + \frac{1}{\varepsilon} \left\langle \mathscr{P} (t) \, \dot{u}_{M,\varepsilon}(t), \phi \right\rangle_{L^2(\Omega_{T})} 
    = \langle f(t), \phi \rangle_{L^2(\Omega_{T})} 
\end{split}
\end{equation}
for a.e. $t \in [0,T]$ and for every $\phi \in V_M$, coupled with initial conditions given by 
\begin{subequations}
\begin{align}
    &u_{M,\varepsilon}(0) = u_{0,M}, \label{eq:u0M} \\
    &\dot u_{M,\varepsilon}(0) = v_{0,M}. \label{eq:v0M}
\end{align}
\end{subequations}

By testing \eqref{eq:weakeq} with $\phi = w_1, \dots, w_M$, the components $(d^1_{M,\varepsilon},\ldots,d_{M,\varepsilon}^M)$ of $d_{M,\varepsilon}$ satisfy the following ODE system
\begin{equation}\label{ODEs}
\begin{cases}
    \displaystyle
    \ddot{d}^k_{M, \varepsilon}(t) + \mathcal{A}^k(d_{M,\varepsilon}(t)) + \mathcal{W}^k(d_{M,\varepsilon}(t)) + \mathcal{R}^k(t,\dot{d}_{M,\varepsilon}(t)) =\mathcal{F}^k(t) \quad k = 1, \dots, M,
    \\
    d_{M,\varepsilon}(0)=(a^1, \dots, a^M),
    \\
    \dot{d}_{M,\varepsilon}(0)=(b^1, \dots, b^M),
\end{cases}
\end{equation}
where, for every $y =(y^1,\ldots,y^M)\in \mathbb{R}^M$, $z =(z^1,\ldots,z^M)\in \mathbb{R}^M$ and $k=1,\dots,M$, we set
\begin{equation*}
\begin{aligned}
    & \mathcal{A}^k(y) := [y^k w_{k},w_{k}]_{s} = y^k [w_{k}]^2_{s} = \lambda_k y^k
    \\
    &\mathcal{W}^k(y) := \int_{\Omega_T} \nabla W \left(\sum_{j=1}^M \, y^j\, w_j(x) \right) \cdot w_k(x) \,dx,
    \\
    &\mathcal{R}^k(t,z) := \frac{1}{\varepsilon} \int_{\Omega_T} \mathscr{P}(t,x) \left(\sum_{j=1}^M z^j \, w_j(x) \right) \cdot w_k(x) \,dx, 
    \\
    &\mathcal{F}^k(t) := \int_{\Omega_T} f(t,x) \cdot w_k(x) \,dx.
\end{aligned}
\end{equation*} 
We now prove that \eqref{ODEs} admits a classical solution $d_{M,\varepsilon} \in W^{2,\infty}(0,T; \R^M)$. Let
\begin{equation*}
g \colon [0,T] \times \mathbb{R}^{2M} \to \mathbb{R}^{2M}, \quad (t,(y,z)) \to (z, (g^1(t,y,z), \dots, g^M(t,y,z))),
\end{equation*}
with $g^k$ be given by 
\begin{equation*}
    g^k(t,y,z) := \mathcal{F}^k(t) - \mathcal{A}^k(y) - \mathcal{W}^k(y) - \mathcal{R}^k(t,z)  \quad k=1,\dots,M.
\end{equation*}
We have that:
\begin{itemize}
	\item[(i)] $g\colon [0,T] \times \mathbb{R}^{2M} \to \mathbb{R}^{2M}$ is a Carathéodory function;
    
	\item[(ii)] there exists a constant $C>0$ (possibly depending on $M$ and $\varepsilon$) such that
	\begin{equation*}
		|g(t,y,z)| \leq C (1 + |y| + |z|) \quad \text{for every } (t,(y,z)) \in [0,T] \times R^{2M}.
	\end{equation*}
\end{itemize}
Indeed, claim (i) follows by the observation that for almost every $t \in [0,T]$ the map $(y,z) \mapsto g(t,(y,z))$ is continuous, thanks to the assumption on the continuity of $\nabla W$ and the linearity with respect to $y$ and $z$ of $\mathcal{A}^k$ and $\mathcal{R}^k$, respectively. Secondly, for every fixed $(y,z) \in \R^{2M}$, the map $t \mapsto g(t,(y,z))$ is measurable: the measurability of the function $t \mapsto \mathcal{R}^k(t,z)$ and the measurability of the function $t \mapsto \mathcal{F}^k(t)$ follows from Fubini's Theorem.

On the other hand, claim (ii) holds due to the following estimates:
\begin{equation*}
\begin{aligned}
    &|\mathcal{F}^k(t)| \leq \norm{f(t)}_{L^2(\Omega_T))} \norm{w_k}_{L^2(\Omega_T))} \leq \|f\|_{L^\infty(0,T;L^2(\Omega_T))} \quad \text{for any } t \in [0,T],
    \\
    &|\mathcal{R}^k(t,z)| \leq \frac{1}{\varepsilon} \sum_{j=1}^M  |z^j| \int_{\Omega_T} |w_j(x)| |w_k(x)| \, dx \leq \frac{1}{\varepsilon} \sum_{j=1}^M  |z^j| \quad \text{for any } t \in [0,T], z \in \R^M,
\end{aligned}
\end{equation*}
and 	
\begin{equation*}
    \begin{aligned}
    |\mathcal{W}^k(y)| &\leq \left| \int_{\Omega_T} \nabla W \left(\sum_{j=1}^M \, y^j\, w_j(x) \right) \cdot w_k(x) \,dx \right|
    \leq \int_{\Omega_T} C_W \left(1+ \sum_{j=1}^M \left| y^j\, w_j(x) \right| \right) |w_k(x)| \, dx
    \\
    & \leq C_W \sqrt{\mathcal{L}^d(\Omega_T)}  + C_W  \sum_{j=1}^M  \left( |y^j| \int_{\Omega_T} |w_j(x)| |w_k(x)| \, dx \right)
    \\
    & \leq \max \left\{ C_W \sqrt{\mathcal{L}^d(\Omega_T)}, C_W \right\} \left(1+ \sum_{j=1}^M  |y^j|\right) \quad \text{for any } y \in \R^M,
    \end{aligned}
\end{equation*}
where we have used the Cauchy Schwarz inequality in $L^2(\Omega_T)$, the orthonormality of the system $\lbrace w_{k} \rbrace_{k\in \N}$ in $L^2(\Omega_T)$ and inequality \eqref{ineq for nabla W(y)} in Remark \ref{property of W}. Hence, claim (ii) is proven, recalling the equivalence of norms on the finite-dimensional spaces.

Then, we apply the existence result provided by \cite[Thm. A.2]{SolombrinoFornasier} (see also \cite[Chap. 1, Thms. 1 \& 2]{Filippov1960differential}), and we obtain the existence of a classical solution $d_{M,\varepsilon} \in W^{2,\infty}(0,T; \R^M)$ to \eqref{ODEs}. In particular, the function $u_{M,\varepsilon}$ given by \eqref{eq: def u M eps} satisfies \eqref{eq:weakeq} with initial conditions \eqref{eq:u0M} and \eqref{eq:v0M}. Moreover,
\begin{equation*}
    u_{M,\varepsilon} \in C^0([0,T]; V_M), \, \dot{u}_{M,\varepsilon} \in C^0([0,T]; V_M) \text{ and } \ddot{u}_{M,\varepsilon} \in L^\infty(0,T; V_M).
\end{equation*}

\medskip
\noindent
\emph{Step 4. Energy estimates.} We first observe that, by $d_{M,\varepsilon} \in W^{2,\infty}(0,T; \R^M)$ and by considering $\phi = \dot{u}_{M,\varepsilon}$ in equation \eqref{eq:weakeq} (see also \cite[Remark 2]{bonafini2021obstacle}), a direct computation yields
\begin{equation}\label{approidentity}
\begin{aligned}
\frac{d}{dt} \left( \,\frac12 \norm{\dot{u}_{M,\varepsilon}(t)}^2_{L^2(\Omega_{T})}+\frac12 [u_{M,\varepsilon}(t)]_{s}^2
+\|W(u_{M,\varepsilon}(t))\|_{L^1(\Omega_{T})} \right) &\\
+\frac{1}{\varepsilon}\int_{\Omega_T} \mathscr{P}(t,x) |\dot{u}_{M,\varepsilon}(t,x)|^2dx & =\langle f(t), \dot{u}_{M,\varepsilon}(t) \rangle_{L^2(\Omega_{T})}.
\end{aligned}
\end{equation}
Then, integrating \eqref{approidentity} between $0$ and $t$, $0\leq t \leq T$, we obtain
\begin{equation}\label{energyestimates1}
\begin{aligned}
\frac12 \norm{\dot{u}_{M,\varepsilon}(t)}^2_{L^2(\Omega_{T})}+\frac12 [u_{M,\varepsilon}(t)]_{s}^2
+\|W(u_{M,\varepsilon}(t))\|_{L^1(\Omega_{T})}
+\frac{1}{\varepsilon}\int_0^t\int_{\Omega_T} \mathscr{P}(\tau,x) |\dot{u}_{M,\varepsilon}(\tau,x)|^2 \,dxd\tau \\
\leq \frac12 \norm{v_{0,M}}^2_{L^2(\Omega_{T})} + \frac12 [u_{0,M}]_{s}^2 +\|W(u_{0,M})\|_{L^1(\Omega_{T})}+ \int_0^t \langle f(t), \dot{u}_{M,\varepsilon}(t) \rangle_{L^2(\Omega_{T})}.
\end{aligned}
\end{equation}
We note that
\[
\lim_{M \to +\infty} \frac12 \norm{v_{0,M}}^2_{L^2(\Omega_{T})} + \frac12 [u_{0,M}]_{s}^2 +\|W(u_{0,M})\|_{L^1(\Omega_{T})} = \underbrace{\frac12 \norm{v_0}^2_{L^2(\Omega_{0})} +\frac12 [u_0]_{s}^2+\|W(u_0)\|_{L^1(\Omega_{T})}}_{E_0}.
\]
Hence, there exists $M^* > 0$ large enough, independent of $\varepsilon$, such that 
\begin{equation*}
\frac12 \norm{v_{0,M}}^2_{L^2(\Omega_{T})} + \frac12 [u_{0,M}]_{s}^2 +\|W(u_{0,M})\|_{L^1(\Omega_{T})}\leq 2 E_0  \quad \text{for every } M \geq M^*. 
\end{equation*}
In particular, for every $M \geq M^*$, we obtain that 
\begin{equation}\label{energyestimates2}
\begin{aligned}
\,\frac12 \norm{\dot{u}_{M,\varepsilon}(t)}^2_{L^2(\Omega_{T})}+\frac12 [u_{M,\varepsilon}(t)]_{s}^2
+\|W(u_{M,\varepsilon}(t))\|_{L^1(\Omega_{T})}
+\frac{1}{\varepsilon}\int_0^t\int_{\Omega_T} \mathscr{P}(\tau,x) |\dot{u}_{M,\varepsilon}(\tau,x)|^2 \,dxd\tau \\
\leq 2 E_0+ \int_0^t \langle f(\tau), \dot{u}_{M,\varepsilon}(\tau) \rangle_{L^2(\Omega_{T})}\,d\tau.
\end{aligned}
\end{equation}
By means of the same Gronwall's argument used to derive \eqref{ineq: gron argument} from \eqref{eq:step0gronwall}, we can prove that there exists a constant $C > 0$ independent of $\varepsilon$ and $M$, such that, for all $M > M^*$,
\begin{equation}\label{ineq: gron argument II}
	\sup_{t \in [0,T]} \left( \norm{\dot{u}_{M,\varepsilon}(t)}^2_{L^2(\Omega_{T})} + [u_{M,\varepsilon}(t)]_{s}^2  \right) \leq C.
\end{equation}
In turn, there exists a positive constant $c>0$ independent of $M, \varepsilon$ and $t$ such that, for all $M > M^*$,
\begin{equation}\label{energyestimates3}
	\frac12 \norm{\dot{u}_{M,\varepsilon}(t)}^2_{L^2(\Omega_{T})}+\frac12 [u_{M,\varepsilon}(t)]_{s}^2
	+\|W(u_{M,\varepsilon}(t))\|_{L^1(\Omega_{T})}
	+\frac{1}{\varepsilon}\int_0^t\int_{\Omega_T} \mathscr{P}(\tau,x)|\dot{u}_{M,\varepsilon}(\tau,x)|^2 \, dx d\tau \leq c.
\end{equation}

\medskip
\noindent
\emph{Step 5. Limits as $M \to +\infty$.} For every fixed $\varepsilon > 0$, the uniform bound in \eqref{ineq: gron argument II} yields the existence of a function $u_{\varepsilon} \in L^\infty(0,T; \tilde{H}^s(\Omega_T))$ with $\dot{u}_{\varepsilon} \in L^\infty(0,T; L^2(\Omega_T))$  such that
\begin{equation}\label{compactness1}
\begin{aligned}
& u_{M,\varepsilon} \rightharpoonup^* u_\varepsilon \quad \text{ in } L^\infty(0,T; \tilde{H}^s(\Omega_T)),
\\
& \dot{u}_{M,\varepsilon} \rightharpoonup^* \dot{u}_{\varepsilon} \quad \text{ in } L^\infty(0,T; L^2(\Omega_T)).
\end{aligned}
\end{equation}
By integrating \eqref{eq:weakeq} in time over $[0,T]$, we can deduce that, for each $M > M^*$, we have
\begin{equation}\label{eq: weak un penalized}
\begin{aligned}
-\int_{0}^{T} \langle \dot{u}_{M,\varepsilon}(t), \dot{\varphi}_M(t) \rangle_{L^2(\Omega_T)} \, dt
+\int_{0}^{T}[u_{M,\varepsilon}(t), \varphi_M(t)]_s \,dt 
+\int_{0}^{T} \scal{\nabla W(u_{M,\varepsilon}(t))}{\varphi_M(t)}_{L^2(\Omega_{T})} \,dt 
\\ +  \frac{1}{\varepsilon}  \int_{0}^{T}\scal{\mathscr{P}(t) \, \dot{u}_{M,\varepsilon}(t)}{\varphi_M(t)}_{L^2(\Omega_{T})} \,dt
= \int_{0}^{T} \langle f(t), \varphi_M(t) \rangle_{L^2(\Omega_T)}\, dt,
\end{aligned}
\end{equation}
for every $\varphi_M \in \mathcal{Y}_M$, where $\mathcal{Y}_M := C^{\infty}_c(0,T; V_M)$. Moreover, we note that $\{\mathcal{Y}_M\}_M$ defines an increasing sequence of subspaces of $\mathcal{Y} := C^{\infty}_c(0,T; \tilde{H}^s(\Omega_T))$, with union dense in $\mathcal{Y}$.
Both $\mathcal{Y}_M$ and $\mathcal{Y}$ are clearly subspaces of $\mathcal{X}$.

In order to prove \eqref{eq: weak u eps penalized}, we now pass to the limit in \eqref{eq: weak un penalized}. To do so, fix a function $\varphi \in \mathcal{Y}$ with $\supp \varphi \subset \mathcal{O}$. Let $\{\varphi_M\}_{M \in \N} \subset \mathcal{Y}_M$ be the sequence of functions given by
\begin{equation}\label{eq: def of varphi_n}
\varphi_M (t) := \pi_M(\varphi(t)) =  \sum_{k = 1}^{M} \langle \varphi(t), w_k \rangle_{L^2(\Omega_T)} w_k \in V_M \quad \text{for a.e. }t \in [0,T],
\end{equation}
for which we also have that
\begin{equation*}
\dot{\varphi}_M (t) = \pi_M(\dot{\varphi}(t)) =  \sum_{k = 1}^{M} \langle \dot{\varphi}(t), w_j \rangle_{L^2(\Omega_T)} w_j \in V_M \quad \text{for a.e. }t \in [0,T].
\end{equation*}
One can prove that, by construction and by Dominated Convergence Theorem, the sequence of functions $\{\varphi_M\}_{M \in \N}$ converges in $\mathcal{X}$ to $\varphi$. 
Then, we prove the following:
\begin{subequations}\label{subeq: lim}
	\begin{align}
	\label{subeq: lim 1} \lim_{M \to +\infty} \int_{0}^{T} \langle \dot{u}_{M,\varepsilon}(t), \dot{\varphi}_M(t) \rangle_{L^2(\Omega_T)} \, dt &= \int_{0}^{T} \langle \dot{u}_{\varepsilon}(t), \dot{\varphi}(t) \rangle_{L^2(\Omega_T)} \, dt,
	\\
	\label{subeq: lim 2} \lim_{M \to +\infty}\int_{0}^{T}[u_{M,\varepsilon}(t), \varphi_M(t)]_s \,dt &= \int_{0}^{T}[u_{\varepsilon}(t), \varphi(t)]_s \,dt,
	\\
	\label{subeq: lim 3} \lim_{M \to +\infty}\int_{0}^{T} \scal{\nabla W(u_{M,\varepsilon}(t))}{\varphi_M(t)}_{L^2(\Omega_{T})} \,dt &= \int_{0}^{T} \scal{\nabla W(u_{\varepsilon}(t))}{\varphi(t)}_{L^2(\Omega_{T})} \,dt,  
	\\ 
    \label{subeq: lim 4}\lim_{M \to +\infty}\int_{0}^{T} \scal{\mathscr{P}(t)\dot{u}_{M,\varepsilon}(t)}{\varphi_M(t)}_{L^2(\Omega_{T})} \,dt &= \int_{0}^{T} \scal{\mathscr{P}(t)\dot{u}_{\varepsilon}(t)}{\varphi(t)}_{L^2(\Omega_{T})} \,dt,
	\\
	  \label{subeq: lim 5} \lim_{M \to +\infty}\int_{0}^{T} \langle f(t), \varphi_M(t) \rangle_{L^2(\Omega_T)}\, dt &= \int_{0}^{T} \langle f(t), \varphi(t) \rangle_{L^2(\Omega_T)}\, dt. 
	\end{align}
\end{subequations}

\smallskip
For the first limit \eqref{subeq: lim 1}, it suffices to consider the estimate
\begin{equation*}
\begin{split}
&\left| \int_{0}^{T} \langle \dot{u}_{M,\varepsilon}(t), \dot{\varphi}_M(t) \rangle_{L^2(\Omega_T)} \, dt 
- 
\int_{0}^{T} \langle \dot{u}_{\varepsilon}(t), \dot{\varphi}(t) \rangle_{L^2(\Omega_T)} \, dt \right|
\\
&\leq \left| \int_{0}^{T} \langle \dot{u}_{M,\varepsilon}(t) -\dot{u}_{\varepsilon}(t), \dot{\varphi}(t) \rangle_{L^2(\Omega_T)} \, dt \right| 
+ \int_{0}^{T} \norm{\dot{u}_{M,\varepsilon}(t)}_{L^2(\Omega_T)} \norm{\dot{\varphi}_M(t) - \dot{\varphi}(t)}_{L^2(\Omega_T)} \, dt,
\\
&\leq \left| \int_{0}^{T} \langle \dot{u}_{M,\varepsilon}(t) -\dot{u}_{\varepsilon}(t), \dot{\varphi}(t) \rangle_{L^2(\Omega_T)} \, dt \right| 
+ \sqrt{C} \norm{\dot{\varphi}_M(t) - \dot{\varphi}(t) }_{L^{1}(0,T;L^2(\Omega_T))} ,
\end{split}
\end{equation*}
and the convergence $\dot{u}_{M,\varepsilon} \rightharpoonup^* \dot{u}_{\varepsilon}$ in $L^\infty(0,T; L^2(\Omega_T))$ (using $\dot{\varphi} \in L^1(0,T;L^2(\Omega_T))$ as test function for the convergence in \eqref{compactness1}) and the convergence of $\varphi_M$ to $\varphi$ in $\mathcal{X}$ along with the uniform bound for $\dot{u}_{M,\varepsilon}$ provided by the constant $C>0$ in \eqref{ineq: gron argument II}.

\smallskip
For the second limit \eqref{subeq: lim 2}, we note that $\varphi \in \mathcal{Y}$ implies $(-\Delta)^s \varphi \in L^1(0,T; H^{-s}(\Omega_T))$, hence by triangle inequality and Cauchy Schwarz inequality for the seminorm $[\cdot,\cdot]_s$ and the inequality $[\cdot]_s \leq \|\cdot\|_s$ in $\tilde{H}^s(\Omega_{T})$, we get
\begin{equation*}
\begin{split}
&\left|\int_{0}^{T}[u_{M,\varepsilon}(t), \varphi_M(t)]_s \,dt - \int_{0}^{T}[u_{\varepsilon}(t), \varphi(t)]_s \,dt
\right|
\\
&\leq \left| \int_{0}^{T}[u_{M,\varepsilon}(t) - u_{\varepsilon}(t), \varphi(t)]_s \,dt  \right| + \left| \int_{0}^{T}[u_{M,\varepsilon}(t), \varphi_M(t)-\varphi(t)]_s \,dt \right|
\\
&\leq \left| \int_{0}^{T} \scal{(- \Delta)^s\varphi(t)}{u_{M,\varepsilon}(t) - u_{\varepsilon}(t)}_{{H^{-s}(\Omega_{T}), \tilde{H}^s(\Omega_{T})}} \,dt \right|  
+  \int_{0}^{T} [u_{M,\varepsilon}(t)]_s \, [\varphi_M(t)-\varphi(t)]_s \,dt 
\\
&\leq \left| \int_{0}^{T} \scal{(- \Delta)^s\varphi(t)}{u_{M,\varepsilon}(t) - u_{\varepsilon}(t)}_{{H^{-s}(\Omega_{T}), \tilde{H}^s(\Omega_{T})}} \,dt \right| 
+ \sqrt{C} \norm{\varphi_M - \varphi}_{L^1(0,T;\tilde{H}^s(\Omega_T))}  ,
\end{split}
\end{equation*}
and the conclusion follows by the convergence ${u}_{M,\varepsilon} \rightharpoonup^* {u}_{\varepsilon}$ in $L^\infty(0,T; \tilde{H}^s(\Omega_T))$ (using $(- \Delta)^s\varphi \in L^1(0,T;H^{-s}(\Omega_T))$ as test function for the convergence in \eqref{compactness1} and the convergence of $\varphi_M$ to $\varphi$ in $\mathcal{X}$ along with the equi-boundedness of $u_{M,\varepsilon}$ provided by the constant $C>0$ in \eqref{ineq: gron argument II}.

\smallskip
For the third limit \eqref{subeq: lim 3}, by the equi-boundedness of $\{\dot{u}_{M,\varepsilon}\}_M$ in $L^\infty(0,T; L^{2}(\Omega_T))$ together with an Ascoli--Arzelà argument for $\{u_{M,\varepsilon}\}_M$ in $C^0([0,T]; L^2(\Omega_T))$, we deduce that
\begin{equation}\label{eq: convergence u^n to u in C0 II}
u_{M,\varepsilon} \to u_{\varepsilon} \text{ in } C^0([0,T]; L^2(\Omega_T)).
\end{equation} 
Then, we can proceed in a similar way and we can apply triangle inequality and Cauchy Schwarz inequality in $L^2(\Omega_T)$, obtaining 
\begin{align*}
\left| \int_{0}^{T} \right.&\left.\scal{\nabla W(u_{M,\varepsilon}(t))}{\varphi_M(t)}_{L^2(\Omega_{T})} \,dt - \int_{0}^{T} \scal{\nabla W(u_{\varepsilon}(t))}{\varphi(t)}_{L^2(\Omega_{T})} \,dt 
\right|
\\
&\leq  \left|\int_{0}^{T} \scal{\nabla W(u_{M,\varepsilon}(t)) - \nabla W(u_{\varepsilon}(t)) }{\varphi(t)}_{L^2(\Omega_{T})}  \,dt \right|
\\
& \quad + \left| \int_{0}^{T}  \scal{\nabla W(u_{M,\varepsilon}(t))}{\varphi_M(t)-\varphi(t)}_{L^2(\Omega_{T})}  \,dt \right|
\\
& \leq \int_{0}^{T} \norm{\nabla W(u_{M,\varepsilon}(t)) - \nabla W(u_{\varepsilon}(t))}_{L^2(\Omega_{T})} \norm{\varphi(t)}_{L^2(\Omega_{T})}  \,dt
\\
& \quad + \int_{0}^{T}  \norm{\nabla W(u_{M,\varepsilon}(t))}_{L^2(\Omega_{T})} \norm{\varphi_M(t)-\varphi(t)}_{L^2(\Omega_{T})}  \,dt 
\\
& \leq \sqrt{T}\norm{\varphi}_{L^\infty(0,T;L^2(\Omega_{T}))} \left( \int_{0}^{T} \norm{\nabla W(u_{M,\varepsilon}(t)) - \nabla W(u_{\varepsilon}(t))}^2_{L^2(\Omega_{T})} \,dt \right)^\frac{1}{2}
\\
& \quad + \norm{\varphi_M - \varphi}_{L^1(0,T;L^2(\Omega_{T}))}  \norm{\nabla W(u_{M,\varepsilon})}_{L^\infty(0,T;L^2(\Omega_{T}))}  
\\
& \leq C_1 \sqrt{T} \norm{\varphi}_{L^\infty(0,T;L^2(\Omega_{T}))} \norm{u_{M,\varepsilon}-u_{\varepsilon}}_{C^0([0,T];L^2(\Omega_T))} + C_2 \norm{\varphi_M - \varphi}_{L^1(0,T;L^2(\Omega_{T}))}, 
\end{align*}
where the two positive constants $C_1$ and $C_2$ coming from the terms containing $\nabla W$ in the last inequality are given by the following straightforward estimates (using the properties of $W$ and the Cauchy Schwarz inequality):
\begin{align*}
    \left(\int_{0}^{T} \norm{\nabla W(u_{M,\varepsilon}(t)) - \nabla W(u_{\varepsilon}(t))}^2_{L^2(\Omega_{T})} \,dt \right)^{\frac{1}{2}} &\overset{\eqref{eq: nabla W}}{\leq} \left(\int_{0}^{T} \left( \int_{\Omega_T} \mathrm{K}^2 |u_{M,\varepsilon}(t) - u_{\varepsilon}(t)|^2 \,dx \right) \,dt\right)^{\frac{1}{2}}
    \\
    & \,\, \leq \underbrace{\mathrm{K} \sqrt{T \mathcal{L}^d(\Omega_T)}}_{C_1} \norm{u_{M,\varepsilon} - u_{\varepsilon}}_{C^0([0,T];L^2(\Omega_T))},
\end{align*}
and
\begin{align*}
    \esssup_{t \in (0,T)} \norm{\nabla W(u_{M,\varepsilon}(t))}_{L^2(\Omega_{T})}  \,dt & \overset{\eqref{ineq for nabla W(y)}}{\leq}  \esssup_{t \in (0,T)} \left( C_W \int_{\Omega_T} (1+|u_{M,\varepsilon}(t,x)|)^2 \,dx \right)^\frac{1}{2}
    \\
    \text{\footnotesize (using $(a+b)^2 \leq 2a^2+2b^2$)} \quad
    & \leq \underbrace{  \sqrt{2C_W} \left( \mathcal{L}^d(\Omega_T) + \norm{u_{M,\varepsilon}}^2_{L^\infty(0,T;L^2(\Omega_T))} \right)^\frac{1}{2}}_{C_2}
\end{align*}
Hence, summarizing the above estimates allows us to conclude that \eqref{subeq: lim 3} holds, thanks to the convergence provided by \eqref{eq: convergence u^n to u in C0 II} and the convergence of $\varphi_n$ to $\varphi$ in $\mathcal{X}$.

\smallskip
For the fourth limit \eqref{subeq: lim 4}, it suffices to estimate 
\begin{equation*}
\begin{split}
&\left| \int_{0}^{T} \langle \mathscr{P}(t)
\dot{u}_{M,\varepsilon}(t), \varphi_M(t) \rangle_{L^2(\Omega_T)} \, dt 
- 
\int_{0}^{T} \langle \mathscr{P}(t) \dot{u}_{\varepsilon}(t), \varphi(t) \rangle_{L^2(\Omega_T)} \, dt \right|
\\
&\leq \left| \int_{0}^{T} \langle \dot{u}_{M,\varepsilon}(t) -\dot{u}_{\varepsilon}(t), \mathscr{P}(t) \varphi(t) \rangle_{L^2(\Omega_T)} \, dt \right| 
+ \int_{0}^{T} \norm{\mathscr{P}(t) \dot{u}_{M,\varepsilon}(t)}_{L^2(\Omega_T)} \norm{\varphi_M(t) - \varphi(t)}_{L^2(\Omega_T)} \, dt,
\\
&\leq \left| \int_{0}^{T} \langle \dot{u}_{M,\varepsilon}(t) -\dot{u}_{\varepsilon}(t), \mathscr{P}(t) \varphi(t) \rangle_{L^2(\Omega_T)} \, dt \right| 
+ \sqrt{C} \norm{\varphi_M - \varphi }_{L^{1}(0,T;L^2(\Omega_T))} ,
\end{split}
\end{equation*}
and conclude by the convergence $\dot{u}_{M,\varepsilon} \rightharpoonup^* \dot{u}_{\varepsilon}$ in $L^\infty(0,T; L^2(\Omega_T))$ (using $\mathscr{P} \varphi \in L^1(0,T;L^2(\Omega_T))$ as test function for the convergence in \eqref{compactness1}) and the convergence of $\varphi_M$ to $\varphi$ in $\mathcal{X}$ along with the uniform bound for $\dot{u}_{M,\varepsilon}$ provided by the constant $C>0$ in \eqref{ineq: gron argument II}.

\smallskip
Finally, for the fifth limit \eqref{subeq: lim 5}, simply consider
\begin{equation*}
\begin{split}
\left| \int_{0}^{T} \langle f(t), \varphi_M(t) \rangle_{L^2(\Omega_T)}\, dt \right. & - \left. \int_{0}^{T} \langle f(t), \varphi(t) \rangle_{L^2(\Omega_T)}\, dt \right| 
\\
&\leq \norm{f}_{L^\infty(0,T;L^2(\Omega_T))} \, \norm{\varphi_M - \varphi}_{L^1(0,T;L^2(\Omega_{T}))},
\end{split}
\end{equation*}
and again the conclusion follows by the convergence of $\varphi_M$ to $\varphi$ in $\mathcal{X}$ and the hypothesis on $f$ (cf. \eqref{eq:f}). 
Therefore, passing to the limit in \eqref{eq: weak un penalized} and using the limits provided by \eqref{subeq: lim}, we can conclude that $u_{\varepsilon}$ satisfies \eqref{eq: weak u eps penalized}, for every fixed $\varepsilon>0$.

\medskip
\noindent
\emph{Step 6. Initial values for $u_\varepsilon$.}
In order to prove that $\dot u_\varepsilon(0) = v_0$, an additional uniform bound on second derivatives is needed, in our case a bound of the sequence $\{\ddot{u}_{M,\varepsilon}\}_{M} \subset L^\infty(0,T; H^{-s}(\Omega_0))$, for every fixed $\varepsilon >0$. Since the space $\mathcal{X}_0$, given by 
\[
\mathcal{X}_0 := \{ \varphi \in L^1(0,T;\tilde{H}^s(\Omega_0)) \cap W^{1,1}(0,T; L^2(\Omega_0)) \colon \, \varphi(0)=\varphi(T)=0 \} \subset \mathcal{X},
\]
is dense in $L^1(0,T; \tilde{H}^{s}(\Omega_0))$, it suffices to provide an estimate for the action of every $\ddot{u}_{M,\varepsilon}$ against every $\varphi \in \mathcal{X}_0$. Since $\ddot{u}_{M,\varepsilon}(t) \in L^2(\Omega_0)$ for almost every $t \in [0,T]$, one has  
\begin{equation}\label{eq: ddot u penalty}
		\int_{0}^{T}\scal{\ddot{u}_{M,\varepsilon}(t)}{\varphi(t)}_{H^{-s}(\Omega_0),\tilde{H}^s(\Omega_{0})}\,dt 
        = -\int_{0}^{T}\scal{\dot{u}_{M,\varepsilon}(t)}{\dot{\varphi}(t)}_{L^2(\Omega_{0})} \, dt.
\end{equation}
We now rewrite the right hand side of the previous equation: using that $\varphi(t) = 0$ in $\Omega_T \setminus \Omega_0$, adding and subtracting the term $\varphi_M$ (cf. \eqref{eq: def of varphi_n}), using the orthogonality to $V_M$ of the function $\dot{\varphi}_M(t) - \dot{\varphi}(t)$, and finally using \eqref{eq: weak un penalized} for the admissible choice of test functions $\varphi_M$, we obtain
\begin{align*}
    &\int_{0}^{T}\scal{\ddot{u}_{M,\varepsilon}(t)}{\varphi(t)}_{H^{-s}(\Omega_0),\tilde{H}^s(\Omega_{0})}\,dt
    = -\int_{0}^{T}\scal{\dot{u}_{M,\varepsilon}(t)}{\dot{\varphi}(t)}_{L^2(\Omega_{T})} \, dt
    \\
    &
    = -\int_{0}^{T}\scal{\dot{u}_{M,\varepsilon}(t)}{\dot{\varphi}_M(t)}_{L^2(\Omega_{T})} \, dt - \underbrace{\int_{0}^{T}\scal{\dot{u}_{M,\varepsilon}(t)}{\dot{\varphi}(t) - \dot{\varphi}_M(t)}_{L^2(\Omega_{T})} \, dt}_{=0}
    \\
    & 
    =
    \int_{0}^{T} \langle f(t), \varphi_M(t) \rangle_{L^2(\Omega_T)}\, dt 
    - \int_{0}^{T}[u_{M,\varepsilon}(t), \varphi_M(t)]_s \,dt 
    - \int_{0}^{T} \scal{\nabla W(u_{M,\varepsilon}(t))}{\varphi_M(t)}_{L^2(\Omega_{T})} \,dt 
    \\
    &
    \quad - \int_{0}^{T} \frac{1}{\varepsilon} \scal{\mathscr{P}(t) \, \dot{u}_{M,\varepsilon}(t)}{\varphi_M(t)}_{L^2(\Omega_{T})} \,dt. 
\end{align*}
Now, reasoning in a similar way as before and essentially replacing $\varphi_M-\varphi$ simply with $\varphi$ in the estimates above for proving \eqref{subeq: lim}, one can prove that inequality \eqref{ineq for nabla W(y)} in Remark \ref{property of W}, the regularity of $f$ (see \eqref{eq:f}) and the equi-boundedness of $u_{M,\varepsilon}$ in $L^\infty(0,T; \tilde{H}^s(\Omega_T))$ and in $W^{1,\infty}(0,T; L^2(\Omega_T))$ (cf. \eqref{ineq: gron argument}) yield the existence of positive constants $D_1,D_2$ and $D_3$ such that the following inequalities hold:
\begin{equation}\label{ineq estimate for ddot u M eps D1D2D3}
    \begin{aligned}
    \left| \int_{0}^{T}[u_{M,\varepsilon}(t), \varphi_M(t)]_s \,dt  \right|
    & \leq D_1 \norm{\varphi}_{L^1(0,T; \tilde{H}^s(\Omega_0))},
    \\
    \left| \int_{0}^{T} \scal{\nabla W(u_{M,\varepsilon}(t))}{\varphi_M(t)}_{L^2(\Omega_{T})} \,dt \right| 
    & \leq D_2 \norm{\varphi}_{L^1(0,T; L^2(\Omega_0))},
    \\
    \left| \int_{0}^{T} \langle f(t), \varphi_M(t) \rangle_{L^2(\Omega_T)}\, dt  \right| 
    & \leq D_3  \norm{\varphi}_{L^1(0,T; L^2(\Omega_0))}.
\end{aligned}
\end{equation}
Note that we used the standard property $\norm{\varphi_M}_{L^1(0,T; \tilde{H}^s(\Omega_0))} \leq \norm{\varphi}_{L^1(0,T; \tilde{H}^s(\Omega_0))}$.

Furthermore, we can prove that there exists a positive constant $D_4$, such that 
\begin{equation}\label{ineq estimate for ddot u M eps D4}
    \left|  \frac{1}{\varepsilon}  \int_{0}^{T}\scal{\mathscr{P}(t) \, \dot{u}_{M,\varepsilon}(t)}{\varphi_M(t)}_{L^2(\Omega_{T})} \,dt \right| \leq D_4  \norm{\varphi}_{L^1(0,T; \tilde{H}^s(\Omega_0))}.
\end{equation}
To this aim, it suffices to consider the following inequalities:
\begin{align*}
	\left| \frac{1}{\varepsilon} \int_{0}^{T}  \scal{\mathscr{P}(t) \, \dot{u}_{M,\varepsilon}(t)}{\varphi_M(t)}_{L^2(\Omega_{T})} \,dt \right|
	&\leq \underbrace{ \left|  \frac{1}{\varepsilon} \int_{0}^{T}  \scal{\mathscr{P}(t) \, \dot{u}_{M,\varepsilon}(t)}{\varphi(t)}_{L^2(\Omega_{T})} \,dt  \right|}_{=0}
	\\
	& \, + \left|  \frac{1}{\varepsilon} \int_{0}^{T} \scal{\mathscr{P}(t) \, \dot{u}_{M,\varepsilon}(t)}{\varphi_M(t) - \varphi(t)}_{L^2(\Omega_{T})} \,dt  \right|
	\\
	\text{\footnotesize (by Cauchy Schwarz in $L^2(\Omega_T)$ and $|\mathscr{P}(t,x)| \leq 1$)} \ & \leq \frac{1}{\varepsilon}  \int_{0}^{T} \norm{\dot{u}_{M,\varepsilon}(t)}_{L^2(\Omega_T)} \norm{\varphi_M(t) - \varphi(t)}_{L^2(\Omega_{T})} \,dt
	\\
	\text{\footnotesize (by Cauchy Schwarz in $[0,T]$ and by \eqref{eq: estimate phi - pi_n phi})} \, 
	& \leq \frac{1}{\varepsilon} \norm{\dot{u}_{M,\varepsilon}}_{L^\infty(0,T;L^2(\Omega_T))} \int_{0}^{T} \frac{1}{\sqrt{\lambda_M}} \norm{\varphi(t)}_{\tilde{H}^s(\Omega_T)} \,dt 
	\\
	\text{\footnotesize (by \eqref{ineq: gron argument II})} \, & \leq \frac{1}{\varepsilon \sqrt{\lambda_M}} \sqrt{C} \norm{\varphi}_{L^1(0,T;\tilde{H}^s(\Omega_T))}.
\end{align*}
Note that, for every fixed $\varepsilon > 0$, the last term goes to zero as $M$ goes to $+\infty$. In particular, this implies the validity of \eqref{ineq estimate for ddot u M eps D4}. Hence, by \eqref{ineq estimate for ddot u M eps D1D2D3} and \eqref{ineq estimate for ddot u M eps D4}, for every fixed $\varepsilon > 0$ we have that
\[
\left| \int_{0}^{T}\scal{\ddot{u}_{M,\varepsilon}(t)}{\varphi(t)}_{H^{-s}(\Omega_0),\tilde{H}^s(\Omega_{0})}\,dt  \right| \leq D \norm{\varphi}_{L^1(0,T; \tilde{H}^s(\Omega_0))}
\]
for a suitable constant $D>0$, thus $\{\ddot{u}_{M,\varepsilon}\}_M$ is equi-bounded in $L^\infty(0,T; H^{-s}(\Omega_0))$.

Eventually, equi-boundedness of $\{\ddot{u}_{M,\varepsilon}\}_M$ in $L^\infty(0,T; H^{-s}(\Omega_0))$ allows to apply an Ascoli--Arzelà argument for $\{\dot u_{M,\varepsilon}\}_M \subset C^0([0,T]; H^{-s}(\Omega_0))$, and we conclude that
\begin{equation}\label{eq: convergence u^n to u in C0 penalty}
    \dot{u}_{M,\varepsilon} \to \dot{u}_{\varepsilon} \text{ in } C^0([0,T]; H^{-s}(\Omega_0)).
\end{equation}
Furthermore, by \cite[Lemma 8.1]{lionsmagenes} we have that
\begin{equation*}
    L^\infty(0,T;\tilde{H}^s(\Omega_T)) \cap C^0([0,T]; L^2(\Omega_T)) \subseteq \cscal([0,T]; \tilde{H}^s(\Omega_T)), 
\end{equation*}
hence we also obtain $u_{\varepsilon} \in \cscal([0,T]; \tilde{H}^s(\Omega_T))$. Similarly, we have
\begin{equation*}
    L^\infty(0,T;L^2(\Omega_0)) \cap C^0([0,T]; H^{-s}(\Omega_0)) \subseteq \cscal([0,T]; L^2(\Omega_0)), 
\end{equation*}
hence $\dot{u}_{\varepsilon} \in \cscal([0,T]; L^2(\Omega_{0}))$. 
In particular, using the initial data for $u_{M,\varepsilon}$ (see \eqref{eq:u0M}) and by \eqref{eq: convergence u^n to u in C0 II} and \eqref{eq: convergence u^n to u in C0 penalty}, one obtains that for every fixed $\varepsilon>0$ the function $u_\varepsilon$ obtained in \eqref{compactness1} satisfies
\begin{equation*}
    u_{\varepsilon}(0) = u_0  \text{ and } \dot{u}_{\varepsilon}(0) = v_0.
\end{equation*}

We are left to prove the validity of the energy inequality \eqref{energy epsilon level}. 
This follows by the very same argument by lower semicontinuity at the end of the proof in Section \ref{sec: time discr}, replacing \eqref{ineq: en bal u^n 2} with \eqref{energyestimates1}.

\medskip
\noindent
\emph{Step 7. Limit as $\varepsilon \to 0$.} We only sketch the outline, since the general argument follows the same steps we implemented for passing to the limit in $M$. First, we observe that estimates \eqref{energyestimates3} and \eqref{ineq: gron argument II} extend uniformly in $\varepsilon$. In particular, we have pre-compactness of the sequence $\{u_\varepsilon\}_{\varepsilon}$ in  $L^\infty(0,T; \tilde{H}^s(\Omega_T))$ and $W^{1,\infty}(0,T; L^2(\Omega_T))$. 
Secondly,  one can prove that $\{\ddot{u}_\varepsilon\}_\varepsilon$ is equi-bounded in $L^\infty(0,T; H^{-s}(\Omega_0))$ uniformly with respect to $\varepsilon$, using \eqref{eq: weak u eps penalized} and the same estimates as in \eqref{eq: ddot u penalty} and \eqref{ineq estimate for ddot u M eps D1D2D3}. Notice that
\begin{equation*}
    \frac{1}{\varepsilon} \int_{0}^{T} \scal{\mathscr{P}(t) \, \dot{u}_{\varepsilon}(t)}{\varphi(t)}_{L^2(\Omega_{T})} \,dt = 0 \quad \text{for all } \varepsilon >0 \text{ and } \varphi \in L^1(0,T; \tilde{H}^s(\Omega_0)).
\end{equation*}
Hence, possibly passing to a subsequence, we deduce the existence of a function 
\[
u \in L^\infty(0,T; \tilde{H}^s(\Omega_T)) \cap W^{1,\infty}(0,T; L^2(\Omega_T))
\]
such that
\begin{equation}\label{compactness for u}
\begin{aligned}
& u_{\varepsilon} \rightharpoonup^* u \quad \text{ in } L^\infty(0,T; \tilde{H}^s(\Omega_T)), && u_{\varepsilon} \to u \quad \text{ in } C^0([0,T]; L^2(\Omega_T)),
\\
& \dot{u}_{\varepsilon} \rightharpoonup^* \dot{u} \quad \text{ in } L^\infty(0,T; L^2(\Omega_T)), && \dot{u}_{\varepsilon} \to \dot{u} \quad \text{ in } C^0([0,T]; H^{-s}(\Omega_0)).
\end{aligned}
\end{equation}
In particular, we obtain from \eqref{energy epsilon level} that there exists a positive constant $C$ such that
\begin{equation*}
    \frac{1}{\varepsilon}\int_0^t\int_{\Omega_T} \mathscr{P}(\tau,x) |\dot{u}_{\varepsilon}(\tau,x)|^2 \,dxd\tau \leq C.
\end{equation*}
This implies that $\dot{u}=0$ a.e. on $((0,T) \times \Omega_T) \setminus \mathcal{O}$. Since $u(0,x)=0$ for a.e. $x \in \R^d \setminus \Omega_0$, $u \in C^0([0,T]; L^2(\Omega_T))$, and by assumption $\Omega_t$ is increasing, i.e., $\R^d \setminus \Omega_t$ is decreasing, we conclude that $u=0$ a.e. on $((0,T) \times \Omega_T) \setminus \mathcal{O}$, hence $u$ satisfies (i) in Definition \ref{defweaksolu}. 

Finally, using estimates similar to those used to prove \eqref{subeq: lim}, one can pass to the limit in \eqref{eq: weak u eps penalized} using \eqref{compactness for u}: we deduce that $u$ satisfies (iii) in Definition \ref{defweaksolu}, for every function $\varphi \in \mathcal{X}$ with $\supp \varphi \subset \mathcal{O}$. Note that
\begin{equation*}
    \frac{1}{\varepsilon} \int_{0}^{T} \scal{\mathscr{P}(t) \, \dot{u}_{\varepsilon}(t)}{\varphi(t)}_{L^2(\Omega_{T})} \,dt = 0 \quad \text{for all } \varepsilon >0 \text{ and } \varphi\in \mathcal{X}, \supp \varphi \subset \mathcal{O}.
\end{equation*}

Finally, one has $u \in \cscal([0,T]; \tilde{H}^s(\Omega_T))$ and $\dot{u} \in \cscal([0,T]; L^2(\Omega_0))$, hence the initial data for $u_\varepsilon$ pass to the limit and $u$ satisfies (ii) of Definition \ref{defweaksolu}. Moreover, the energy inequality for $u$ follows from the same lower semicontinuity argument at the end of the proof in Section \ref{sec: time discr}, this time replacing \eqref{ineq: en bal u^n 2} with \eqref{energy epsilon level}.
This concludes the proof.
\end{proof}

\appendix
\section{Fractional semilinear wave equations in cylindrical domains}\label{appendix}
In this appendix we recall a result on the existence of weak solutions to the fractional semilinear wave equation, which we generalize by adding a non-zero forcing term $f$. The proof can be done by a constructive time-discrete variational scheme whose main ideas date back to \cite{Ro30} (see also \cite{ambrosio1995minimizing} for the minimizing movements approach introduced by De Giorgi) and which have, since then, been adapted to many instances of parabolic and hyperbolic equations. For instance, the homogeneous fractional wave equations (i.e., with no potential term and no forcing term) have been studied in \cite{bonafini2019variational} (see also \cite{bonafini2022weak,bonafini2021obstacle}). The general case with a forcing term can be tackled by defining a suitable approximant for the function $f$ (see \cite[Thm. 3.1]{dal2019cauchy}).

Let $T, s > 0$ and $m, d \in \N$. Let $\Omega \subset \R^d$ be an open bounded set with Lipschitz boundary. For a function $u \colon [0,T] \times \R^d \to \R^m$, let us consider the problem
\begin{equation}\label{eq:semiwaveswithforce}
\begin{cases}
\ddot{u} + (-\Delta)^s u +\nabla W(u)= f    
& \text{in } (0,T) \times \Omega,                
\\
u = 0                               	
& \text{in } [0,T] \times (\R^d \setminus \Omega),    \\
u(0,x) = u_0(x)                          	
& \text{for }  x \in \Omega,                         
\\
\dot{u}(0,x) = v_0(x) 
&\text{for }  x \in \Omega.
\end{cases}
\end{equation}
where: 
\begin{subequations}
	\begin{align}
		\raisebox{0.25ex}{\tiny$\bullet$} \, \, & W \in C^1(\R^m) \, \text{is a non-negative potential with Lipschitz continuous gradient, } \nonumber 
		\\
		&\text{i.e., $\exists \mathrm{K} > 0$ such that } |\nabla W(x) - \nabla W(y)| \leq \mathrm{K} |x-y| \text{ for all } x,y \in \R^m,
		\label{app eq: nabla W}
		\\
		\raisebox{0.25ex}{\tiny$\bullet$} \, \, & f\in L^\infty(0,T; L^2(\R^d)) \text{ with } \supp f \subset (0,T) \times \Omega, \label{app eq: f}
		\\
		\raisebox{0.25ex}{\tiny$\bullet$} \, \, &u_0\in \tilde{H}^s(\Omega) \text{ and }
		v_0\in L^2(\Omega). \label{app eq: u_0 u_1}
	\end{align}
\end{subequations}

We define a weak solution to \eqref{eq:semiwaveswithforce} as follows.
\begin{definition}\label{def:weak}
	We say that a function $u \colon (0, T)\times \R^{d}\to \R^{m}$ is a weak solution to problem \eqref{eq:semiwaveswithforce} if the following holds:
	\begin{enumerate}
		\item[(i)]
		$u \in L^\infty(0,T; \tilde{H}^s(\Omega))$, $\dot{u} \in L^{\infty}(0,T;L^2(\Omega))$
        and $\ddot{u} \in L^{\infty}(0,T;H^{-s}(\Omega))$;
	
		\item[(ii)] for every $\varphi \in L^1(0,T; \tilde{H}^s(\Omega)) \cap W^{1,1}(0,T;L^2(\Omega))$ one has
		\begin{equation*}
		\begin{split}
		-\int_{0}^T \scal{\dot{u}(t)}{\dot{\varphi}(t)}_{L^2(\Omega)} dt + \int_{0}^T [ u(t), \varphi(t) ]_{s} \, dt +\int_{0}^T \scal{ \nabla W(u(t))}{\varphi(t)}_{L^2(\Omega)} \,dt 
		\\
		= \scal{\dot{u}(0)}{\varphi(0)}_{L^2(\Omega)} -\scal{\dot{u}(T)}{\varphi(T)}_{L^2(\Omega)} + \int_{0}^T \scal{f(t)}{\varphi(t)}_{L^2(\Omega)}\,dt
		\end{split}
		\end{equation*}
		with
		\begin{equation*}
		u(0)=u_{0} \quad \text{ and } \quad \dot{u}(0)=v_{0}.
		\end{equation*}
	\end{enumerate}
\end{definition}

Finally, we introduce the energy definition for a weak solution to problem \eqref{eq:semiwaveswithforce}. 
\begin{definition}
	Let $u$ be a weak solution to \eqref{eq:semiwaveswithforce}. We define the energy of $u$ as
	\begin{equation*}
	E(u(t)) = \frac12 \|\dot{u}(t)\|^{2}_{L^{2}(\Omega)}+\frac12 [u(t)]_{s}^2+\|W(u(t))\|_{L^{1}(\Omega)}, \quad \text{for every }t \in [0,T].
	\end{equation*}
\end{definition}

The following theorem can be proved following the same strategy used in \cite[Thm. 3]{bonafini2021obstacle}, with the only difference consisting in the treatment of the forcing term $f$ and taking into account \cite[Lemma 8.1]{lionsmagenes}.
\begin{theorem}\label{app:thmexistence}
	There exists a weak solution $u$ to problem \eqref{eq:semiwaveswithforce} in the sense of Definition \ref{def:weak}. Moreover, the following energy inequality holds:
	\begin{equation*}
	E(u(t)) \leq E(u(0)) + \int_{0}^{t} \scal{f(\tau)}{\dot{u}(\tau)}_{L^2(\Omega)} \,d\tau \qquad \text{for every } t \in [0,T].
	\end{equation*}
	Furthermore,
	\begin{equation*}
		u \in C^0([0,T];L^2(\Omega)) \cap \cscal([0,T];\tilde{H}^s(\Omega))  \quad \text{and} \quad \dot{u} \in C^0([0,T];H^{-s}(\Omega))\cap \cscal([0,T];L^2(\Omega)).
    \end{equation*}
\end{theorem}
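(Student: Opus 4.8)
The plan is to follow the constructive time-discrete variational scheme used for the homogeneous problem in \cite[Thm. 3]{bonafini2021obstacle}, incorporating the source term $f$ in the spirit of \cite[Thm. 3.1]{dal2019cauchy}. Fix $n \in \N$, set $h := T/n$ and $t_k := kh$, and choose the piecewise-constant approximant $f^n$ of $f$ equal on $(t_{k-1},t_k]$ to $f^n_k := h^{-1}\int_{t_{k-1}}^{t_k} f(r)\,dr$; by \eqref{app eq: f} the sequence $\{f^n\}_n$ is equi-bounded in $L^\infty(0,T;L^2(\Omega))$ and $f^n \to f$ strongly in $L^2(0,T;L^2(\Omega))$. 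Put $u^n_0 := u_0$, $u^n_{-1} := u_0 - h v_0$, and define $u^n_k \in \tilde{H}^s(\Omega)$ inductively as a minimizer over $\tilde{H}^s(\Omega)$ of
\[
\mathcal{F}^n_k(u) := \frac{1}{2h^2}\norm{u - 2u^n_{k-1} + u^n_{k-2}}_{L^2(\Omega)}^2 + \frac12 [u]_s^2 + \norm{W(u)}_{L^1(\Omega)} - \scal{f^n_k}{u}_{L^2(\Omega)}.
\]
A minimizer exists by the direct method: coercivity uses $W \ge 0$ and a Cauchy inequality to absorb the linear term, weak lower semicontinuity uses convexity of the quadratic terms together with Fatou's lemma and the continuity of $W$. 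The associated Euler--Lagrange equation reads, in $H^{-s}(\Omega)$,
\[
\frac{u^n_k - 2u^n_{k-1} + u^n_{k-2}}{h^2} + (-\Delta)^s u^n_k + \nabla W(u^n_k) = f^n_k .
\]

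From minimality, via the standard discrete energy computation (telescoping the inertial term, and using convexity of $[\cdot]_s^2$ together with \eqref{ineq for W(x)-W(y)}) one obtains
\[
\frac12\norm{D_h u^n_k}_{L^2(\Omega)}^2 + \frac12 [u^n_k]_s^2 + \norm{W(u^n_k)}_{L^1(\Omega)} \le \frac12\norm{v_0}_{L^2(\Omega)}^2 + \frac12 [u_0]_s^2 + \norm{W(u_0)}_{L^1(\Omega)} + \sum_{j=1}^{k} h\,\scal{f^n_j}{D_h u^n_j}_{L^2(\Omega)},
\]
where $D_h u^n_k := (u^n_k - u^n_{k-1})/h$. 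Absorbing the work term by a Cauchy inequality and a discrete Gr\"onwall lemma yields bounds on $\{u^n_k\}$ in $\tilde{H}^s(\Omega)$ and on $\{D_h u^n_k\}$ in $L^2(\Omega)$, uniform in $k$ and $n$. Introducing the piecewise-affine interpolant $u^n$ of $\{u^n_k\}$ (so that $\dot u^n$ is piecewise constant, equal to $D_h u^n_k$) together with the piecewise-constant interpolants (which differ from $u^n$ by $O(h)$ in $L^\infty(0,T;L^2(\Omega))$ thanks to the velocity bound), one gets: $u^n$ equi-bounded in $L^\infty(0,T;\tilde{H}^s(\Omega))$, $\dot u^n$ equi-bounded in $L^\infty(0,T;L^2(\Omega))$, and --- using the Euler--Lagrange equation, the linear growth \eqref{ineq for nabla W(y)} of $\nabla W$ and the $L^\infty(0,T;L^2)$ bound on $f^n$ --- the discrete acceleration equi-bounded in $L^\infty(0,T;H^{-s}(\Omega))$.

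Passing to a subsequence, $u^n \rightharpoonup^* u$ in $L^\infty(0,T;\tilde{H}^s(\Omega))$ and $\dot u^n \rightharpoonup^* \dot u$ in $L^\infty(0,T;L^2(\Omega))$; an Ascoli--Arzel\`{a} argument (using the uniform Lipschitz bounds from the velocity and acceleration estimates and the compact embeddings of $\tilde{H}^s(\Omega)$ into $L^2(\Omega)$ and of $L^2(\Omega)$ into $H^{-s}(\Omega)$) gives $u^n \to u$ in $C^0([0,T];L^2(\Omega))$ and $\dot u^n \to \dot u$ in $C^0([0,T];H^{-s}(\Omega))$. Passing to the limit in the discrete weak formulation is now routine: the linear terms pass by these convergences and by $f^n \to f$ in $L^2(0,T;L^2(\Omega))$, while $\nabla W(u^n) \to \nabla W(u)$ in $L^\infty(0,T;L^2(\Omega))$ follows from \eqref{app eq: nabla W} and the $C^0([0,T];L^2)$ convergence. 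This yields (ii) of Definition \ref{def:weak}, and $u(0)=u_0$, $\dot u(0)=v_0$ follow from the $C^0$ convergences and the prescribed discrete data. For the energy inequality we let $n\to\infty$ in the discrete estimate: weak lower semicontinuity handles $\norm{\dot u(t)}_{L^2(\Omega)}^2$ and $[u(t)]_s^2$, the strong convergence with \eqref{ineq for W(x)-W(y)} handles $\norm{W(u(t))}_{L^1(\Omega)}$, and the weak convergence $\dot u^n \rightharpoonup \dot u$ in $L^2(0,T;L^2(\Omega))$ paired with the strong convergence $f^n \to f$ in $L^2(0,T;L^2(\Omega))$ handles the work term. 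Finally, the scalar-continuity assertions are immediate from \cite[Lemma 8.1]{lionsmagenes}: since $\tilde{H}^s(\Omega)$ is reflexive and embeds continuously in $L^2(\Omega)$,
\[
u \in L^\infty(0,T;\tilde{H}^s(\Omega)) \cap C^0([0,T];L^2(\Omega)) \subset L^\infty(0,T;\tilde{H}^s(\Omega)) \cap \cscal([0,T];L^2(\Omega)) = \cscal([0,T];\tilde{H}^s(\Omega)),
\]
and similarly $\dot u \in \cscal([0,T];L^2(\Omega))$ upon taking the pair $(L^2(\Omega),H^{-s}(\Omega))$.

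The main obstacle, which is also the only genuinely new point with respect to \cite{bonafini2021obstacle}, is the treatment of the source term: the discrete approximant $f^n$ must be chosen so that the work term $\sum_j h\,\scal{f^n_j}{D_h u^n_j}$ both closes the discrete energy estimate (after a Cauchy inequality and discrete Gr\"onwall) and converges in the limit to $\int_0^t\scal{f(r)}{\dot u(r)}_{L^2(\Omega)}\,dr$, which is precisely where the strong $L^2(0,T;L^2)$ convergence of $f^n$ enters (combined with the weak $L^2(0,T;L^2)$ convergence of $\dot u^n$). Propagating the acceleration bound through $f^n$ is harmless, since $L^\infty(0,T;L^2(\Omega))$ embeds continuously in $L^\infty(0,T;H^{-s}(\Omega))$. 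All remaining steps are a transcription of the homogeneous argument.
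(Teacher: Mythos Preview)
Your proposal is correct and follows precisely the approach indicated by the paper, which does not spell out a proof but only states that the result follows from the time-discrete variational scheme of \cite[Thm.~3]{bonafini2021obstacle} with the sole modification being the treatment of the forcing term $f$ (in the spirit of \cite[Thm.~3.1]{dal2019cauchy}) and the application of \cite[Lemma~8.1]{lionsmagenes} for the scalar continuity. Your sketch expands exactly these points and correctly identifies the only nontrivial novelty as the handling of the work term via the piecewise-constant approximants $f^n$.
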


\subsection*{Acknowledgment}
M.\,Bonafini and R.\,Molinarolo are members of the ``Gruppo Nazionale per l'Analisi Matematica, la Probabilit\`a e le loro Applicazioni'' (GNAMPA) of the ``Istituto Nazionale di Alta Matematica'' (INdAM). 

V.\,P.\,C.\,Le gratefully acknowledges the support of the Department of Mathematics at Heidelberg University through a postdoctoral position, as well as the support of STRUCTURES, Cluster of Excellence at Heidelberg University.

R.\,Molinarolo is supported by the Project ``Giochi a campo medio, trasporto e ottimizzazione in sistemi auto-organizzati e machine learning”, funded by MUR, D.D. 47/2025, PNRR - Missione 4, Componente 2, Investimento 1.2 - funded by European Union NextGenerationEU, CUP B33C25000380001. R.\,Molinarolo acknowledges the support of the project funded by the EuropeanUnion-NextGenerationEU under the National
Recovery and Resilience Plan (NRRP), Mission 4 Component 2 Investment 1.1 - Call PRIN 2022 No. 104 of February 2, 2022 of Italian Ministry of University and Research; Project 2022SENJZ3 ``Perturbation problems and asymptotics for elliptic differential equations: variational and potential theoretic methods." 

The authors wish to thank Prof. G.\,Orlandi for suggesting this problem and for his interest in this work. We also acknowledge Profs. G.\,Canevari and F.\,Solombrino for fruitful discussions and valuable advices. 

\bibliographystyle{plain}
\bibliography{bibliography}
\end{document}